\newtheorem{theorem}{Theorem}[section]
\newtheorem{claim}[theorem]{Claim}
\newtheorem{lemma}[theorem]{Lemma}
\newtheorem{proposition}[theorem]{Proposition}
\newtheorem{corollary}[theorem]{Corollary}
\theoremstyle{definition}
\newtheorem{definition}[theorem]{Definition}
\newtheorem{question}[theorem]{Question}
\theoremstyle{remark}
\newtheorem{remark}[theorem]{Remark}
\newtheorem{fact}[theorem]{Fact}
\def\l{{\langle}}
\def\r{{\rangle}}
\def\mathunderaccent#1#2 {\let\theaccent#1\skewfactor#2
\mathpalette\putaccentunder}
\def\putaccentunder#1#2{\oalign{$#1#2$\crcr\hidewidth
\vbox to.2ex{\hbox{$#1\skew\skewfactor\theaccent{}$}\vss}\hidewidth}}
\newcommand{\lusim}[1]{\smash{\underset{\raisebox{1.2pt}[0cm][0cm]{$\sim$}}
{{#1}}}}
\def\smallbox#1{\leavevmode\thinspace\hbox{\vrule\vtop{\vbox
   {\hrule\kern1pt\hbox{\vphantom{\tt/}\thinspace{\tt#1}\thinspace}}
   \kern1pt\hrule}\vrule}\thinspace}
\DeclareMathOperator{\dom}{dom}
\title{Saturation properties of ultrafilters in canonical inner models}
\author{Tom Benhamou}\address[Benhamou]{Department of Mathematics, Statistics, and Computer Science, University of Illinois at Chicago, Chicago, IL 60607, USA}
\email{tomb@uic.edu}
\date{\today}
\subjclass[2010]{03E02, 03E35, 03E55}
\keywords{Inner models, Galvin's property, Ultrafilters, p-point, diamond, independent family,supercompact cardinals}
\begin{document}
\let\labeloriginal\label
\let\reforiginal\ref
\def\ref#1{\reforiginal{#1}}
\def\label#1{\labeloriginal{#1}}

\begin{abstract}
 We improve Galvin's Theorem for ultrafilters which are p-point limits of p-points. This implies that in all the canonical inner models up to a superstrong cardinal, every $\kappa$-complete ultrafilter over a measurable cardinal $\kappa$ satisfies the Galvin property. On the other hand, we prove that supercompact cardinals always carry  non-Galvin $\kappa$-complete ultrafilters. Finally, we prove that $\diamondsuit(\kappa)$ implies the existence of a $\kappa$-complete filter which extends the club filter and fails to satisfy the Galvin property. This answers questions \cite[Question 5.22]{TomMotiII},\cite[Question 3.4]{Non-GalvinFil} and questions ,\cite[Question 4.5]{BenGarShe},\cite[Question 2.26]{bgp}.
 \end{abstract}
\maketitle
\section{introduction}
Fred Galvin discovered the following remarkable saturation property of normal filters \cite[Theorem 3.3]{Galvintheo} (for the proof of the generalized theorem see for example \cite[Prop. 1.4]{GitDensity}): If $\kappa^{<\kappa}=\kappa$, then every normal filter $U$ over $\kappa$ satisfies the \textit{Galvin property}, that is:  $$\forall\l A_i\mid i<\kappa^+\r\in[U]^{\kappa^+}\exists I\in[\kappa^+]^\kappa, \bigcap_{i\in I}A_i\in U.$$
A filter which satisfies the Galvin property is called a \textit{Galvin filter}. This property, as well as some of its variants, have been recently studied in several papers \cite{MR3604115,GartiTilt,ghhm,BenGarShe,NegGalSing,bgp,partOne,Non-GalvinFil}. It has been used by Gitik and the author to extend the study of intermediate models of Magidor-Radin forcing \cite{TomMotiII} and the Tree-Prikry forcing \cite{OnPrikryandCohen}. 

 Galvin's theorem was recently improved for ultrafilter \cite{TomMotiII}, namely, from the same assumption $\kappa^{<\kappa}=\kappa$, we can deduce that  a finite product of $p$-point ultrafilters is a Galvin ultrafilter. In section~\ref{improveGal}, we  improve even more this result and extend the class of filters for which Galvin's property holds to sums of $p$-points:
\vskip 0.3 cm
    \textbf{Theorem \ref{general ppoint}.} 
 Suppose that $W\equiv_{RK}\sum_{U,\alpha} U_\alpha$ where $U$ is a $p$-point ultrafilter over $\kappa$ and for each $\alpha<\kappa$, $U_\alpha$ is a $p$-point ultrafilter over some $\delta_\alpha\leq\kappa$. Then $W$ has the Galvin property. 
\vskip 0.3 cm
Although this seems as a slight improvement to the one which appears in \cite{TomMotiII}, it turns out that it is significant to the analysis of ultrafilters in canonical inner models. 
This result easily generalizes to  filters $W$ which are Rudin-Keisler equivalent to a $p$-point limit of $p$-point limits of $p$-points, and this can be pushed to any finite number of limits of $p$-points.

\subsection{The Galvin property in canonical inner models} In \cite{TomMotiII}, it was noticed that in $L[U]$, every $\kappa$-complete ultrafilter satisfies the Galvin property. The reason is essentially that every $\kappa$-complete ultrafilter over $\kappa$ is Rudin-Keisler isomorphic to a finite product of the unique normal ultrafilter $U$ which exists in $L[U]$, then the theorem about finite product of $p$-points applies. Our aim here is to extend this result to canonical inner models which are compatible with greater large cardinals. More precisely, we will prove that in all known canonical inner models below a superstrong cardinal, every $\kappa$-complete ultrafilter is Rudin-Keisler isomorphic to an $n$-fold limit of $p$-points (see Definition \ref{Def: n-fold}), which then falls under our improvement of Galvin's theorem. We will use  G.~Goldberg's \textit{Ultrapower Axiom}($\mathrm{UA}$) who proved \cite{GoldbergUA} that this axiom follows from weak comparison and is therefore expected to hold in every canonical inner model of set theory. Goldberg proved that $\mathrm{UA}$ has severe impact of the structure of $\sigma$-complete ultrafilters and this is precisely why it fits so well to this paper. 
Our results are applicable to the Mitchell-Steel indexed models $L[\mathbb{E}]$  allowing extenders of superstrong type on the sequence $\mathbb{E}$. Then use F.~Schlutzenberg \cite{Schlutz,Schlutz1} results for those models to prove the following theorem: 

\vskip 0.3 cm
    \textbf{Corollary \ref{GalinCan}.}
 Suppose that there is no measurable limit of superstrong cardinals in the Mitchell-Steel model $L[\mathbb{E}]$. Then every $\kappa$-complete ultrafilter over $\kappa$ satisfies\footnote{ The notation $Gal(\mathcal{F},\kappa,\lambda)$ means that for any collections $\mathcal{A}$ of $\lambda$-many sets in $\mathcal{F}$, there is a sub-collection $\mathcal{B}\subseteq \mathcal{A}$ consisting of $\kappa$-many sets such that $\cap \mathcal{B}\in \mathcal{F}$.}  $Gal(U,\kappa,\kappa^+)$.
\vskip 0.3 cm
At the moment, we do not know whether this result can be further pushed to canonical inner models which are compatible with greater large cardinals, and as we will see next, relates to the open problem of finding a canonical inner model with a supercompact cardinal.
\subsection{Non-Galvin ultrafilters at very large cardinals}
In \cite{TomMotiII}, it was asked if it is consistent to have a $\kappa$-complete ultrafilter which fails to satisfy the Galvin property. This was answered by Garti, Shelah and the author in  \cite{BenGarShe} starting from a supercompact cardinal and improved later by Gitik and the author to a single measurable. In \cite[Question~4.5]{BenGarShe}, the following question appears:
\begin{question}
Is it consistent to have a supercompact cardinal $\kappa$ such that every $\kappa$-complete ultrafilter satisfies the Galvin property?
\end{question}
This question was further investigated in \cite{bgp}, where it was proven that it is consistent that every ground model $\kappa$-complete ultrafilter extends to a Galvin $\kappa$-complete ultrafilter. However, this is not enough as new $\kappa$-complete ultrafilters which do not extend ground model ultrafilters might appear in the generic extension.
The third objective of this paper is to  give a negative answer to this question:

 \vskip 0.3 cm    \textbf{Theorem \ref{SupercompactGalvin}.}
 Suppose that $j:V\rightarrow M$ is an elementary embedding with $crit(j)=\kappa$, $M^{\kappa}\subseteq M$, $j_U:V\rightarrow M_U$ is the derived normal ultrapower and $k:M_U\rightarrow M$ is the factor map. Suppose that $j[\kappa^+],k[j_U(\kappa^+)]\in M$, and that $j_U(\kappa^+)<j(\kappa)$. Then there is a non-Galvin $\kappa$-complete ultrafilter over $\kappa$. 
 \vskip 0.3 cm
 In particular, if $\kappa$ is a $2^\kappa$-supercompact cardinal then there is a non-Galvin $\kappa$-complete ultrafilter. However, this ultrafilter does not necessarily extend the club filter.
 In order to show the existence of such an ultrafilter which does extend the club filter, will combine three techniques, the first is the construction of a special $\kappa$-independent family from a paper by Y.~Hayut \cite{YairIndep}. Secondly, we will use this independent family to construct non-Galvin filters which appears in \cite{Non-GalvinFil} and the finally the techniques from \cite{OnPrikryandCohen} to lift elementary embedding so that a specific $\kappa$-independent family (namely, mutually generic Cohen sets) witnesses the failure of the Galvin property. 

\subsection{Non-Galvin filters extending the club filter}
By applying Galvin's theorem to the club filter $Cub_\kappa$, where $\kappa$ satisfies $\kappa^{<\kappa}=\kappa$, we conclude that $Cub_\kappa$ satisfies the Galvin property, namely, that from every collection $\mathcal{A}$ of $\kappa^+$-many clubs at $\kappa$, one can always extract a subcollection $\mathcal{B}\subseteq \mathcal{A}$ of size $\kappa$ such that $\bigcap\mathcal{B}$ is a club. In order to extend Galvin's theorem, we can start by removing the assumption that $\kappa^{<\kappa}=\kappa$. In this direction, Abraham and Shelah \cite{AbrahamShelah1986} forced a model where $\kappa^{<\kappa}>\kappa$ and $Cub_\kappa$ in a non-Galvin filter, thus proving that the assumption $\kappa^{<\kappa}=\kappa$ is necessary. Another possible extension of the theorem would be to keep the assumption that $\kappa^{<\kappa}=\kappa$, and give up the normality assumption. Theorem \ref{generalgeneral p-point}, is exactly this kind of extension to limits of $p$-point filters. It is impossible to extend the theorem to $q$-points since a non-Galvin $\kappa$-complete ultrafilter which extends the club filter consistently exists \cite{OnPrikryandCohen}. If we do not assume large cardinals, then it does not make sense to ask for non-Galvin $\kappa$-complete ultrafilters. Instead, we can ask for non-Galvin $\kappa$-complete filters, or more interestingly ones which extend the club filter. This was specifically asked in \cite[Question 5.22]{TomMotiII}. In \cite{Non-GalvinFil}, such a filter was constructed from a $\kappa$-independent family, but the filter constructed was not guaranteed to extend the club filter. The construction in this paper provides such a filter assuming that $\diamondsuit$ holds:
\vskip 0.3 cm
\textbf{Theorem \ref{nongalfil}.}
 Suppose that $\kappa$ is a regular cardinal such that $\diamondsuit(\kappa)$ holds, then there is a $\kappa$-complete filter on $\kappa$, extending the club filter, and failing to satisfy the Galvin property.
% Suppose that $\kappa$ is a regular cardinal and that $\diamond(\kappa)$ holds. Then there is a non-Galvin $\kappa$-complete filter $\mathcal{F}$, such that $Cub_\kappa\subseteq \mathcal{F}$.
 \vskip 0.3 cm
 This theorem immediately applies to $L$:
 \vskip 0.3 cm
\textbf{Corollary \ref{Lsituation}.} $L$ is a model of $GCH$ where every regular uncountable cardinal $\kappa$ admits a non-Galvin $\kappa$-complete filter $\mathcal{F}$ which extends the club filter. 
\vskip 0.3 cm

We also apply this construction of a special $\kappa$-independent family to get an ultrafilter on a supercompact cardinal which extend the club filter, this is done in theorem \ref{SupercomClub}.
\subsection*{Notations} If  $f:A\rightarrow B$ is a function we denote by $f[X]=\{f(x)\mid x\in X\}$ and $f^{-1}[Y]=\{a\in A\mid f(a)\in Y\}$.  
Given $\kappa>\omega$ and a $\kappa$-complete ultrafilter $U$, we denote by $M_U$ the transitive collapse of the ultrapower by $U$ and $j_U:V\rightarrow M_U$ the usual ultrapower embedding. If $W,U$ are ultrafilter on $X,Y$ resp.~we define the \textit{Rudin-Keisler} order denoted $U\leq_{RK}W$ if there is a function $\pi:X\rightarrow Y$ such that $U=\{B\subseteq Y\mid \pi^{-1}[B]\in W\}$. We say that $U\equiv_{RK}W$ is $U\leq_{RK}W$ and $W\leq_{RK}U$, or equivalently, if there is $\pi\restriction A$ above is one-to-one for some $A\in W$. If $M$ is any model of $ZFC$, the relativization of the objects to this model are denoted by $()^M$, for example, $(\kappa^+)^M$, $V_\kappa^M$, $j^M_E$, etc. We say that $\kappa$ is a  \textit{measurable cardinal} if it carries a non-principal $\kappa$-complete ultrafilter. We say it is a \textit{$\lambda$-supercompact cardinal} if there is an elementary embedding $j:V\rightarrow M$ with $crit(j)=\kappa$ and $M^{\lambda}\subseteq M$. We say that it is a \textit{superstrong cardinal} if there is an elementary embedding $j:V\rightarrow M$ with $crit(j)=\kappa$ and $V_{j(\kappa)}\subseteq M$.
\section{The Galvin property at limits of ultrafilters}\label{improveGal}
Recall that a $\kappa$-complete  ultrafilter $U$ over $\kappa$ is called a \textit{p-point} if  for every function $f:\kappa\rightarrow\kappa$ which is nonconstant $(mod\ U)$ i.e. for every $\gamma<\kappa$, $f^{-1}[\{\gamma\}]\notin U$, $f$ is almost one-to-one $(mod\ U)$, namely, there exists $X\in U$ such that for each $\gamma<\kappa$, $f^{-1}[\gamma]\cap X$ is bounded. The following proposition is essentially due to Puritz \cite{PURITZ1972215}, who formulated it for $\kappa=\omega$. The generalization to $\kappa>\omega$ appears in Kanamori's  paper \cite{Kanamori1976UltrafiltersOA}. As Puritz's (and Kanamori's) formulation of this proposition involves ``skies" and ``constellations", which we will not define in this paper, let us provide a proof not involving these notions:
\begin{proposition}\label{kanamoriequiv}
Let $U$ be a $\kappa$-complete ultrafilter over $\kappa$. Then $U$ is $p$-point iff for every $\alpha<j_U(\kappa)$ there is $f:\kappa\rightarrow \kappa$ such that $\alpha\leq j_U(f)(\kappa)$.
\end{proposition}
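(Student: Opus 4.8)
The plan is to unwind the statement into a concrete domination property modulo $U$ and then read off both implications from the definition of a $p$-point. First fix a function $h:\kappa\to\kappa$ representing the ordinal $\kappa$ in the ultrapower, i.e.\ one with $[h]_U=\kappa$; such an $h$ exists because $crit(j_U)=\kappa$ forces $\kappa<j_U(\kappa)$ and every ordinal below $j_U(\kappa)$ is the class of a function from $\kappa$ to $\kappa$. Note that $[h]_U=\kappa$ entails $\{\xi<\kappa:h(\xi)>\gamma\}\in U$ for each $\gamma<\kappa$, so in particular $h$ is not constant $(mod\ U)$. The role of $h$ is the identity $j_U(f)(\kappa)=j_U(f)([h]_U)=[f\circ h]_U$, valid for every $f:\kappa\to\kappa$ by \L o\'s's theorem; since every ordinal below $j_U(\kappa)$ is $[g]_U$ for some $g:\kappa\to\kappa$, the condition in the proposition is then equivalent to: \emph{for every $g:\kappa\to\kappa$ there is $f:\kappa\to\kappa$ with $[g]_U\le[f\circ h]_U$}.

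For the forward direction, suppose $U$ is a $p$-point and let $g:\kappa\to\kappa$. Applying the $p$-point property to $h$ (which is non-constant $(mod\ U)$) yields $X\in U$ such that $h^{-1}[\gamma]\cap X$ is bounded in $\kappa$ for every $\gamma<\kappa$. Set $f(\delta):=\sup\{g(\xi):\xi\in X,\ h(\xi)\le\delta\}+1$. This is well defined into $\kappa$: the set $\{\xi\in X:h(\xi)\le\delta\}=h^{-1}[\delta+1]\cap X$ is bounded, hence of size $<\kappa$, so by regularity of $\kappa$ the displayed supremum is $<\kappa$. For each $\xi\in X$ the ordinal $g(\xi)$ is among those over which the supremum defining $f(h(\xi))$ ranges, so $g(\xi)<(f\circ h)(\xi)$; since $X\in U$, this gives $[g]_U<[f\circ h]_U=j_U(f)(\kappa)$, which is more than enough.

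For the converse, assume the domination property and let $g:\kappa\to\kappa$ be non-constant $(mod\ U)$; we must find $X\in U$ with $g^{-1}[\gamma]\cap X$ bounded for every $\gamma<\kappa$. Apply the property to $\mathrm{id}$ to get $f:\kappa\to\kappa$ with $[\mathrm{id}]_U\le[f\circ h]_U$; replacing $f$ by $\delta\mapsto\sup_{\delta'\le\delta}f(\delta')$, which still maps into $\kappa$ by regularity, we may take $f$ non-decreasing. Since $g$ is non-constant $(mod\ U)$ we have $[g]_U\ge\kappa=[h]_U$ (a non-constant map into $\kappa$ has class at least $\kappa$), hence $h(\xi)\le g(\xi)$ for $U$-almost every $\xi$, and monotonicity of $f$ yields $f\circ h\le f\circ g$ $(mod\ U)$; combining, $[\mathrm{id}]_U\le[f\circ g]_U$, that is, $\xi\le f(g(\xi))$ for all $\xi$ in some $X\in U$. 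For such $\xi$, if $g(\xi)<\gamma$ then $\xi\le f(g(\xi))\le f(\gamma)$, so $g^{-1}[\gamma]\cap X\subseteq f(\gamma)+1$ is bounded. Hence $U$ is a $p$-point.

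The one genuinely clever point I expect to be the heart of the argument is the definition of $f$ in the forward direction: bounding $g$ along the ``level sets'' of $h$, which is exactly where the $p$-point hypothesis (applied to $h$ itself) is used, and where regularity of $\kappa$ keeps the construction inside $\kappa$. The reduction of the first paragraph — producing a legitimate representative $h$ of $\kappa$, observing that $[h]_U=\kappa$ makes $h$ non-constant, and the bookkeeping identity $j_U(f)(\kappa)=[f\circ h]_U$ — also needs care but is routine once set up, and the converse is then a short computation with \L o\'s's theorem and monotonicity.
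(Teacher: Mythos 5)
Your proof is correct and follows essentially the same route as the paper's: in the forward direction you apply the $p$-point property to a representative of $\kappa$ and dominate the target function along its level sets (the paper's $g(\gamma)=\sup(f[\sup(\pi^{-1}[\gamma+1]\cap X)+1])$ is exactly your $f$), and in the converse you use a monotone dominating function to show preimages are bounded, just as the paper does with $X=\{\nu:\nu\le g(f(\nu))\}$. Your write-up is in fact slightly cleaner, since you make explicit the monotonization step and apply the $p$-point hypothesis only to the representative of $\kappa$ rather than to both functions.
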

\begin{proof}
Suppose that $U$ is a $p$-point. Let $\kappa\leq [f]_{U}<j_U(\kappa)$ also let $[\pi]_U=\kappa$. Then $\pi,f:\kappa\rightarrow\kappa$ and $f,\pi$ are not constant.
Since $U$ is $p$-point we conclude that there is $X\in U$ such that for every $\gamma<\kappa$, $X\cap f^{-1}[\gamma],X\cap \pi^{-1}[\gamma]$ are bounded in $\kappa$. In particular, $j_U(\pi)([id]_U)=[\pi]_U=\kappa$.  Define $$g(\gamma)=\sup(f[\sup(\pi^{-1}[\gamma+1]\cap X)+1])$$ then $g:\kappa\rightarrow \kappa$. Let us prove that $j_U(g)(\kappa)\geq[f]_U$. Indeed, $$j_U(g)(\kappa)=\sup(j_U(f)[\sup(j_U(\pi)^{-1}[\kappa+1]\cap j_U(X))+1])\geq j_U(f)([id]_U)=[f]_U.$$ In the other direction, let $f$ be not constant $(mod \ U)$.  By assumption, there is a monotone function $g:\kappa\rightarrow \kappa$ such that $j_U(g)(\kappa)\geq [id]_U$. By monotonicity and since $[f]_U\geq\kappa$, $[id]_U\leq j_U(g)(\kappa)\leq j_U(g)([f]_U)$. In particular,  we have that
$$X=\{\nu<\kappa\mid \nu\leq g(f(\nu))\}\in U.$$
Let us prove that $f$ is almost $1-1$ ($mod \ U$). Let $\gamma<\kappa$. 
Define $\Gamma=\sup(g[\gamma])$. Then for every $\alpha\in X\cap f^{-1}[\gamma]$, $\alpha\leq g(f(\alpha))\in g[\gamma]$. Hence $\alpha\leq \Gamma$, and $f^{-1}[\gamma]\cap X\subseteq \Gamma$, as wanted. 
\end{proof}
In this section, we aim to expand the class of ultrafilters which are known to be Galvin. The best result known so far is the one from \cite[Corollary 5.29]{TomMotiII} where the following was proven:
\begin{proposition}\label{galvinGen}
 Suppose that $\kappa^{<\kappa}=\kappa$ and let $F$ be a product of $p$-point filters  over $\kappa$ . Let $\langle X_i\mid i<\kappa^+\rangle$ be a sequence of sets such that for every $i<\kappa^+$, $X_i\in F$, and let $\langle Z_i\mid i<\kappa^+\rangle$ be any sequence of subsets of $\kappa$. Then there is $Y\subseteq \kappa^+$ of cardinality $\kappa$, such that 
 \begin{enumerate}
     \item $\bigcap_{i\in Y}X_i\in F$.
     \item there is $\alpha\notin Y$ such that $[Z_{\alpha}]^{<\omega}\subseteq \bigcup_{i\in Y}[Z_i]^{<\omega}$
 \end{enumerate}
 In particular, $F$ is a Galvin filter.
\end{proposition}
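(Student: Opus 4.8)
I would prove the proposition by induction on the number $n$ of factors in the product $F=U_{1}\times\cdots\times U_{n}$, each $U_{\ell}$ a $p$-point filter over $\kappa$, carrying the auxiliary sequence $\langle Z_{i}\mid i<\kappa^{+}\rangle$ along from the outset; clause~(2), in the form with $[\,\cdot\,]^{<\omega}$, is exactly the extra invariant that lets the induction on $n$ close, so it cannot be appended after the fact. The base case $n=1$ is Galvin's tree argument adapted to a single $p$-point filter, and the inductive step is a Fubini decomposition in which $\langle Z_{i}\rangle$ transports ``goodness'' from one fibre to $U_{1}$-almost all of them.

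\textbf{Base case.} Let $F=U$ be a $p$-point filter. I would first invoke the $p$-point property in the form underlying Proposition~\ref{kanamoriequiv}: it provides a uniform normal form, so that after replacing each $X_{i}$ by a member of $U$ we may assume a single scale function enumerates each $X_{i}$ in an order-preserving way, making the $X_{i}$ behave towards $U$ essentially as clubs behave towards a normal filter. On this reduced configuration I would run Galvin's tree argument: since $\kappa^{<\kappa}=\kappa$, for each $\nu<\kappa$ there are only $\kappa$-many traces $\langle X_{i}\cap\nu,\,Z_{i}\cap\nu\rangle$, so one may form the tree of traces realized by $\kappa^{+}$-many indices (ordered by extension) and --- using $2^{<\kappa}=\kappa$ to keep the levels of size $\le\kappa$ and the $p$-point property to handle limit levels --- descend it cofinally while keeping the index set large, arriving at $Y\in[\kappa^{+}]^{\kappa}$ with $\bigcap_{i\in Y}X_{i}\in U$, which is~(1). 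For~(2), a further pigeonhole on the $Z$-traces gives an index $\alpha$ whose finite subsets $[Z_{\alpha}]^{<\omega}=\{s_{\xi}\mid\xi<\kappa\}$ all occur in traces appearing along the branch; since the branch nodes are $\kappa^{+}$-fat while only $\kappa$ choices are needed, one folds into $Y$, for each $\xi$, a branch-coherent index $i_{\xi}$ with $s_{\xi}\subseteq Z_{i_{\xi}}$. Carrying this out inside the same recursion that produces $Y$ is what keeps~(1) intact, and is precisely why~(1) and~(2) must be established together.

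\textbf{Inductive step.} Write $F=U_{1}\times G$ with $G=U_{2}\times\cdots\times U_{n}$ a product of $p$-point filters over $\kappa$. Each $X_{i}\in F$ is coded, on a set $T_{i}\in U_{1}$, by its fibres $(X_{i})_{(\alpha)}\in G$. As there are $\kappa^{+}$-many $T_{i}\in U_{1}$ and $\kappa^{+}$ is regular above $\kappa$, some coordinate $\alpha^{*}$ lies in $T_{i}$ for $\kappa^{+}$-many $i$; I would apply the induction hypothesis to $\langle(X_{i})_{(\alpha^{*})}\mid i\in I^{*}\rangle$, where $I^{*}=\{i\mid\alpha^{*}\in T_{i}\}$, but with the auxiliary sequence of the hypothesis instantiated by a coding of the behaviour of the fibres $(X_{i})_{(\alpha)}$ at the remaining coordinates. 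Clause~(2) of the hypothesis then forces the resulting $Y$ to be rich enough that $\bigcap_{i\in Y}(X_{i})_{(\alpha)}\in G$ holds not only at $\alpha^{*}$ but for $U_{1}$-almost every $\alpha$, whence $\bigcap_{i\in Y}X_{i}\in F$; the original $\langle Z_{i}\rangle$ is threaded through unchanged, so clause~(2) for $F$ is inherited from clause~(2) at $\alpha^{*}$.

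\textbf{Main obstacle.} Two points carry the weight. First, the limit stage of the tree argument in the base case: one must verify that the set assembled from a cofinal branch (or an adequate approximation to one) really belongs to the $p$-point $U$, and this is exactly where Proposition~\ref{kanamoriequiv} --- that the functions $\kappa\to\kappa$ are cofinal below the seed of $U$ --- is indispensable, together with $2^{<\kappa}=\kappa$ to bound the width of the tree. Second, and more delicate, one must design the $Z$-coding in the inductive step so that the single-index clause~(2) of the hypothesis genuinely effects the transfer ``one $Y$ works simultaneously at $U_{1}$-almost all fibres'', and so that it stays compatible with clause~(1) inside a single recursion; it is to make this bookkeeping work that the statement is phrased with an auxiliary sequence $\langle Z_{i}\rangle$ and with $[\,\cdot\,]^{<\omega}$ rather than with the $Z_{i}$ directly.
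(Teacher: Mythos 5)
The base case of your induction is essentially sound, though it can be done more cleanly than you describe: with $\kappa^{<\kappa}=\kappa$ one stabilizes the traces $\langle X_i\cap\nu,\ Z_i\cap\nu\rangle$ (only $\kappa$ many possible values at each level $\nu$), finds a single reference index $\xi^*$ whose trace class $H_{\xi^*,\nu}=\{\gamma\mid X_\gamma\cap\nu=X_{\xi^*}\cap\nu,\ Z_\gamma\cap\nu=Z_{\xi^*}\cap\nu\}$ has size $\kappa^+$ for every $\nu$, and picks $Y=\{\beta_j\mid j<\kappa\}$ with $\beta_j\in H_{\xi^*,\rho^{(j)}}$, where the $\rho^{(j)}$ bound $\pi^{-1}[j]\cap B$ for a fixed $B\in U$ witnessing the $p$-point property. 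Clause (1) then comes from a modified diagonal intersection, and clause (2) holds automatically with $\alpha=\xi^*$: any finite $s\subseteq Z_{\xi^*}$ lies below some $\rho^{(j)}$, hence $s\subseteq Z_{\xi^*}\cap\rho^{(j)}=Z_{\beta_j}\cap\rho^{(j)}$. There is no need to ``fold in'' additional indices $i_\xi$ for clause (2), and doing so risks damaging clause (1); the same $Y$ and the single $\xi^*$ do both jobs.

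The inductive step, however, has a genuine gap that I do not think can be repaired along the lines you sketch. Writing $F=U_1\times G$, what you must produce is one set $Y\in[\kappa^+]^{\kappa}$ such that $\bigcap_{i\in Y}(X_i)_{(\alpha)}\in G$ for $U_1$-almost \emph{every} $\alpha$ simultaneously; applying the induction hypothesis at a single coordinate $\alpha^*$ yields a $Y$ adapted only to that fibre, and the conclusion of clause (2) --- the existence of \emph{one} index $\alpha\notin Y$ with $[Z_\alpha]^{<\omega}\subseteq\bigcup_{i\in Y}[Z_i]^{<\omega}$ --- is far too weak to transfer membership of a $\kappa$-fold intersection in $G$ from one fibre to $U_1$-almost all fibres, no matter how you code the remaining fibres into the $Z_i$. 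Clause (2) is a rider about an arbitrary auxiliary sequence $\langle Z_i\rangle$, obtained for free by adding $Z_i\cap\nu$ to the traces being stabilized; it is not the engine that makes the product case close, and this paper does not prove Proposition \ref{galvinGen} by induction on the number of factors (the statement is quoted from its source, and the argument actually carried out here, in the proof of Theorem \ref{limit of ppoints} for the harder case of sums, is not inductive). That argument stabilizes simultaneously the traces of the first-coordinate projections $A^{(0)}_\gamma\cap\alpha_1$ \emph{and} of all fibres $A^{(1)}_{\gamma,\beta}\cap\alpha_2$ for $\beta<\alpha_1$, via the sets $H_{\xi,\langle\alpha_1,\alpha_2\rangle}$, and then uses the $p$-point bounding ordinals $\rho^{(j)}$ of all the factor ultrafilters at once to check directly that a modified diagonal intersection taken in both coordinates is contained in $\bigcap_{j<\kappa}A_{\beta_j}$. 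To salvage your outline you would have to replace the fibrewise appeal to the induction hypothesis by this two-level simultaneous trace stabilization; the coding-into-$Z_i$ device does not accomplish the transfer.
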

\begin{definition}
 Let $U$ be an ultrafilter over $X$ and $\l U_x\mid x\in X\r$ be a sequence of ultrafilters such that $U_x$ is an ultrafilter over some set $Y_x$. Define the $U-$sum of the $U_x$ as
 the ultrafilter $\sum_{U,\alpha} U_\alpha$ over $\bigcup_{x\in X}\{x\}\times Y_x$ defined  as follows:
 $$B\in \sum_{U,x} U_x\Leftrightarrow \{x\in X\mid (B)_x\in U_x\}\in U$$
 Where $(B)_x=\{y\in Y_x\mid \l x,y\r\in B\}$.
 
 If each $X=\kappa$ and for each $\alpha<\kappa$ $Y_\alpha=\delta_\alpha\leq\kappa$, denote the $U$-limit of the $U_\alpha$'s as the ultrafilter $lim_UU_\alpha$ over $\kappa$ which is defined as follows:
 $X\in lim_UU_\alpha\Leftrightarrow \{\alpha<\kappa\mid X\cap\delta_\alpha\in U_\alpha\}\in U$.
\end{definition}
\begin{fact}\label{fact:RK projection of sum}
 The projection on the right coordinate $\pi_2:\kappa\times\kappa\rightarrow \kappa$ restricted to $\bigcup_{\alpha<\kappa}\{\alpha\}\times\delta_\alpha$ (the space of $\sum_{U,\alpha}U_\alpha$) is a Rudin-Keisler projection of $\sum_{U,\alpha}U_\alpha$ to $lim_UU_\alpha$.\end{fact}
 \begin{fact}\label{fact:galvin downward closed}
 If $U\leq_{RK} W$, then $W$ is Galvin $\Rightarrow$ $U$ is Galvin.
 \end{fact}
 \begin{fact}\label{Factfactor}
     If $U$ is an ultrafilter over $\kappa$ and $U_\alpha$ are ultrafilters for $\alpha<\kappa$ then $j_{\sum_{U,\alpha}U_\alpha}:V\rightarrow M_{\sum_{U,\alpha}U_\alpha}$ can be factored to $j^{M_U}_{U^*}\circ j_U$ where $U^*=[\alpha\mapsto U_\alpha]_U$.
 \end{fact}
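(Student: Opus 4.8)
The plan is to build an explicit isomorphism between the (uncollapsed) ultrapowers $\Ult(V,\sum_U U_\alpha)$ and $\Ult(M_U,U^*)$, commuting with the canonical embeddings, and then to appeal to uniqueness of transitive collapses. By \L o\'s's theorem, $U^*=[\alpha\mapsto U_\alpha]_U$ is an ultrafilter of $M_U$ living on the ordinal $\rho:=[\alpha\mapsto\delta_\alpha]_U$, where $\delta_\alpha=\dom U_\alpha$. Write $D=\{(\alpha,\beta)\mid \alpha<\kappa,\ \beta<\delta_\alpha\}$, which is $\sum_U U_\alpha$-large, and for $F\colon D\to V$ set $F_\alpha=F(\alpha,\cdot)\colon\delta_\alpha\to V$; then the sequence $\langle F_\alpha\mid\alpha<\kappa\rangle$ represents inside $M_U$ a function $\hat F:=[\alpha\mapsto F_\alpha]_U$ with domain $\rho$. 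I would define
$$k\big([F]_{\sum_U U_\alpha}\big):=[\hat F]^{M_U}_{U^*}.$$

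The heart of the matter is a two-layer use of \L o\'s's theorem. Let $E$ stand for either $=$ or $\in$. Unravelling the definition of the $U$-sum, $[F]_{\sum_U U_\alpha}\mathrel{E}[F']_{\sum_U U_\alpha}$ holds iff
$$\{\alpha<\kappa\mid \{\beta<\delta_\alpha\mid F_\alpha(\beta)\mathrel{E}F'_\alpha(\beta)\}\in U_\alpha\}\in U.$$
On the other hand, by \L o\'s's theorem inside $M_U$ the set $\{\xi<\rho\mid \hat F(\xi)\mathrel{E}\hat F'(\xi)\}$ equals, as computed in $M_U$, the object $[\alpha\mapsto\{\beta<\delta_\alpha\mid F_\alpha(\beta)\mathrel{E}F'_\alpha(\beta)\}]_U$, and this lies in $U^*=[\alpha\mapsto U_\alpha]_U$ iff the displayed condition holds — once more by \L o\'s's theorem, now applied to $\langle U_\alpha\mid\alpha<\kappa\rangle$. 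Hence $k$ both preserves and reflects $=$ and $\in$, so it is a well-defined injection. It is onto as well: any element of $\Ult(M_U,U^*)$ is $[G]^{M_U}_{U^*}$ with $G=[\alpha\mapsto G_\alpha]_U$, and (after shrinking a $U$-large set) each $G_\alpha$ may be taken to be a function with $\dom G_\alpha=\delta_\alpha$, so that $F(\alpha,\beta):=G_\alpha(\beta)$ gives $\hat F=G$. Thus $k$ is an $\in$-isomorphism, in particular elementary.

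It remains to see that $k$ commutes with the canonical embeddings. For $x\in V$, each $(c^D_x)_\alpha$ is the function $\beta\mapsto x$ on $\delta_\alpha$, so $\widehat{c^D_x}$ is, interpreted in $M_U$, the function $\xi\mapsto j_U(x)$ on $\rho$; therefore $k$ sends the canonical image of $x$ in $\Ult(V,\sum_U U_\alpha)$ to the canonical image of $j_U(x)$ in $\Ult(M_U,U^*)$. Since in the setting of this paper all the ultrafilters are $\kappa$-complete, both ultrapowers are well-founded; passing to transitive collapses turns $k$ into an isomorphism of $M_{\sum_U U_\alpha}$ onto the transitive collapse $N$ of $\Ult(M_U,U^*)$, and an isomorphism between transitive classes is the identity. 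Therefore $N=M_{\sum_U U_\alpha}$ and $j_{\sum_U U_\alpha}=j^{M_U}_{U^*}\circ j_U$, as claimed.

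I do not expect a genuine conceptual obstacle; the point requiring attention is keeping the two nested instances of \L o\'s's theorem apart — the outer ultrapower by $U$ (which is used both to manufacture $U^*$ from $\langle U_\alpha\mid\alpha<\kappa\rangle$ and to compare the sequences $\langle F_\alpha\mid\alpha<\kappa\rangle$) versus the inner ultrapower by $U^*$ taken within $M_U$ — together with the routine observation that every function of $M_U$ with domain $\rho$ can, modulo $U$, be represented by an honest sequence $\langle G_\alpha\mid\alpha<\kappa\rangle$ of functions with $\dom G_\alpha=\delta_\alpha$.
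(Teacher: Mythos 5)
Your proof is correct, and it is the standard argument for this factorization; the paper itself states this as a \emph{Fact} and offers no proof, so there is nothing to diverge from. The two-level use of \L o\'s's theorem (once for $U$ to identify $\hat F=[\alpha\mapsto F_\alpha]_U$ and the membership $[\alpha\mapsto S_\alpha]_U\in U^*$, once for $U^*$ inside $M_U$), the surjectivity via representing any $G\in M_U$ with domain $\rho$ by an honest sequence $\langle G_\alpha\rangle$, and the computation on constant functions all check out. The only point that leans on context rather than the literal hypotheses is well-foundedness of the two ultrapowers, which you correctly attribute to the standing ($\sigma$-)completeness assumptions of the paper; note also that your $\in$-isomorphism by itself transfers well-foundedness from $\Ult(V,\sum_U U_\alpha)$ to the internal ultrapower, so nothing further is needed there.
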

 In the next theorem we will need the so-called \textit{modified diagonal intersection} which is defined as follows:
     Let $U$ be a $\kappa$-complete ultrafilter over $\kappa$ and let $\pi:\kappa\rightarrow\kappa$ be the minimal non constant function $(mod \ U)$, namely $[\pi]_U=\kappa$. For a sequence of sets $\l A_\alpha\mid \alpha<\kappa\r\subseteq U$, we defined the modified diagonal intersection by
     $$\Delta^{*,\pi}_{\alpha<\kappa}A_\alpha=\{\nu<\kappa\mid \forall \alpha<\pi(\nu), \ \nu\in A_\alpha\}$$
    We will ignore the superscript $\pi$ when the choice of $\pi$ is irrelevant or clear from the context. Note that if $U$ is normal, we may choose $\pi=id$ and then $\Delta^*_{\alpha<\kappa}A_\alpha=\Delta_{\alpha<\kappa}A_\alpha$ is the standard diagonal intersection. In general, it takes a routine argument to check that:
    \begin{fact}
        Let $U$ be a $\kappa$-complete ultrafilter over $\kappa$. For every $\l A_\alpha\mid \alpha<\kappa\r\subseteq U$, $\Delta^*_{\alpha<\kappa}A_\alpha\in U$.\end{fact}
 The following theorem is a joint result with M. Gitik:
\begin{theorem}\label{limit of ppoints}
 Suppose that $W\equiv_{RK}\sum_{U,\alpha} U_\alpha$ is such that $U,U_\alpha$ are $p$-point ultrafilters over $\kappa$. Then $W$ has the Galvin property. 
\end{theorem}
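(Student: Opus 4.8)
The plan is to reduce, via the factor map, to a single application of Proposition~\ref{galvinGen} inside the ultrapower $M_U$. Since $W\equiv_{RK}\sum_U U_\alpha$ gives $W\leq_{RK}\sum_U U_\alpha$, Fact~\ref{Factfactor} lets us assume $W=\sum_U U_\alpha$, so it suffices to show $\sum_U U_\alpha$ is Galvin. Note that $\kappa$ carries a $\kappa$-complete ultrafilter, hence is inaccessible and $\kappa^{<\kappa}=\kappa$, so Proposition~\ref{galvinGen} is applicable over $\kappa$; and, being a theorem of $ZFC$, it also holds in $M_U$. As recalled above, $j_{\sum_U U_\alpha}=j^{M_U}_{U^*}\circ j_U$ with $U^*=[\alpha\mapsto U_\alpha]_U$, and by {\L}o\'s $U^*$ is, in $M_U$, a $p$-point ultrafilter over $j_U(\kappa)$; moreover $j_U(\kappa)$ is inaccessible in $M_U$, so Proposition~\ref{galvinGen} may be applied inside $M_U$ to $U^*$.

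I would then fix $\langle A_i\mid i<\kappa^+\rangle\in[\sum_U U_\alpha]^{\kappa^+}$ and put $S_i=\{\alpha<\kappa\mid (A_i)_\alpha\in U_\alpha\}\in U$. Write $d=[id]_U$, let $\langle A'_{i'}\mid i'<j_U(\kappa^+)\rangle=j_U(\langle A_i\mid i<\kappa^+\rangle)$, and set $\hat B_{i'}=(A'_{i'})_d$; this sequence lies in $M_U$. Routine {\L}o\'s computations (using $U_d^{M_U}=U^*$) give $\hat B_{j_U(i)}=[\alpha\mapsto (A_i)_\alpha]_U\in U^*$ for all $i<\kappa^+$, and for every $I\subseteq\kappa^+$
$$\bigcap_{i\in I}A_i\in\textstyle\sum_U U_\alpha\ \Longleftrightarrow\ \bigcap_{i'\in j_U(I)}\hat B_{i'}\in U^* ;$$
furthermore the right-hand side already forces $\hat B_{i'}\in U^*$ for \emph{every} $i'\in j_U(I)$, i.e. $\bigcap_{i\in I}S_i\in U$. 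Thus the theorem reduces to finding $I\in[\kappa^+]^\kappa$ with $\bigcap_{i'\in j_U(I)}\hat B_{i'}\in U^*$.

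To produce such an $I$, I would work in $M_U$: the set $E=\{i'<j_U(\kappa^+)\mid\hat B_{i'}\in U^*\}$ is in $M_U$ and contains $j_U[\kappa^+]$. Apply Proposition~\ref{galvinGen} in $M_U$ to the $p$-point $U^*$ and the sequence $\langle\hat B_{i'}\mid i'\in E\rangle$ (padding up to length $j_U(\kappa^+)$, or just using $<j_U(\kappa)$-completeness of $U^*$ when $|E|^{M_U}<j_U(\kappa)$), fed with an auxiliary sequence $\langle Z_{i'}\mid i'\in E\rangle$ coding the trace of $j_U[\kappa^+]$ on $E$; this yields $Y^*\in M_U$, $Y^*\subseteq E$, with $\bigcap_{i'\in Y^*}\hat B_{i'}\in U^*$ together with the combinatorial clause~(2). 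Using clause~(2) and the fact that $U$ is a $p$-point (so that $j_U\restriction\kappa^+$ is tame enough), one thins $Y^*$ back in $V$ to a set of the form $j_U(I)$ with $I\in[\kappa^+]^\kappa$; since then $\bigcap_{i'\in j_U(I)}\hat B_{i'}\supseteq\bigcap_{i'\in Y^*}\hat B_{i'}\in U^*$, this $I$ works.

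I expect this last coordination step to be the main obstacle: $Y^*$ lives in $M_U$ and need not be $j_U(J)$ for any $J\in V$, yet one must extract a genuine $I\in[\kappa^+]^\kappa$ such that not only $j_U[I]$ but also every ``new'' index of $j_U(I)$ — those represented by non-constant functions $\kappa\to I$ — falls inside $Y^*$. It is precisely here that the $p$-point hypothesis on $U$ (to control $j_U\restriction\kappa^+$) and the $p$-point hypotheses on the $U_\alpha$ (entering through Proposition~\ref{galvinGen} inside $M_U$ and through the coding used for clause~(2)) are all needed; granting that step, the remainder is bookkeeping with the displayed ultrapower equivalences, and the same scheme, iterating the factorization, gives the extension to iterated $p$-point limits.
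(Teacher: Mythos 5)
Your reduction to the ultrapower is set up correctly as far as the bookkeeping goes: the equivalence $\bigcap_{i\in I}A_i\in\sum_U U_\alpha\Leftrightarrow\bigcap_{i'\in j_U(I)}\hat B_{i'}\in U^*$ is a valid \L o\'s computation. But the proof has a genuine gap, and you have located it yourself: the entire difficulty of the theorem is concentrated in the final ``coordination step,'' which you do not carry out. Producing $I\in[\kappa^+]^\kappa$ with $j_U(I)\subseteq Y^*$ is far stronger than producing a large subset of $E$: every ordinal of $j_U(I)$ represented by a non-constant function $\kappa\to I$ must land in $Y^*$, and clause (2) of Proposition~\ref{galvinGen} gives no control of that kind. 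Worse, the specific mechanism you propose for arranging it cannot be run as stated: the auxiliary sequence $\langle Z_{i'}\mid i'\in E\rangle$ ``coding the trace of $j_U[\kappa^+]$'' is not an element of $M_U$, since $j_U[\kappa^+]$ is a cofinal subset of $j_U(\kappa^+)$ of order type $\kappa^+<j_U(\kappa)$, and its membership in $M_U$ would contradict the regularity of $j_U(\kappa^+)$ there. So Proposition~\ref{galvinGen} applied \emph{inside} $M_U$ cannot see the data you need it to respect, and an arbitrary $Y^*\subseteq E$ of size $j_U(\kappa)$ in $M_U$ need not contain any set of the form $j_U(I)$.

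The paper does not pass through $M_U$ at all; it runs a direct combinatorial generalization of Galvin's argument in $V$. Writing $A_i^{(0)}=\{\alpha\mid (A_i)_\alpha\in U_\alpha\}$ and $A^{(1)}_{i,\alpha}=(A_i)_\alpha$, it defines for each $\xi<\kappa^+$ and each pair $\alpha_1<\alpha_2$ the set $H_{\xi,\langle\alpha_1,\alpha_2\rangle}$ of indices $\gamma$ whose data agrees with that of $\xi$ up to $\langle\alpha_1,\alpha_2\rangle$, uses $\kappa^{<\kappa}=\kappa$ and a pigeonhole argument to find $\xi^*$ for which all these agreement sets have size $\kappa^+$, and then chooses the $\beta_j$ inductively inside appropriate $H_{\xi^*,\langle\rho^{(j)}_U,\delta^{(j)}\rangle}$, where the bounds $\rho^{(j)}$ come from the p-point property of $U$ and of the $U_\alpha$ (sets on which the representing functions of $\kappa$ have bounded preimages). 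The verification that a modified diagonal intersection $C^*$ lies inside $\bigcap_j A_{\beta_j}$ is exactly the control over the ``new'' coordinates that your proposal defers. In short, the skeleton you give is consistent with the statement but the theorem's actual content is missing; you would need to replace the appeal to Proposition~\ref{galvinGen} in $M_U$ by this agreement-set construction (or something equivalent) in $V$.
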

\begin{proof}
Let $\l A_i\mid i<\kappa^+\r\in [W]^{\kappa^+}$, denote by $A_{i,\alpha}^{(1)}=\{\beta\mid \l\alpha,\beta\r\in A_i\}$. Also, let $A^{(0)}_{i}=\{\alpha\mid A_{i,\alpha}^{(1)}\in U_\alpha\}\in U$. For every $\alpha_1<\alpha_2\in [\kappa]^{2}$, and every $\xi<\kappa^+$, define
$$H_{\xi,\l\alpha_1,\alpha_2\r}=\Big\{\gamma<\kappa^+\mid  A^{(0)}_{\gamma}\cap \alpha_1= A^{(0)}_{\xi}\cap \alpha_1\text{ and } \forall \beta<\alpha_1, \ A^{(1)}_{\gamma,\beta}\cap\alpha_2=A^{(1)}_{\xi,\beta}\cap\alpha_2\Big\}.$$
\begin{claim}\label{Claim: Glavin's claim}
There is $\xi^*<\kappa^+$, such that for every $\alpha_1,\alpha_2$, $|H_{\xi^*,\l\alpha_1,\alpha_2\r}|=\kappa^+$.
\end{claim}
\begin{proof}[\textit{Proof of Claim \ref{Claim: Glavin's claim}.}]
Suppose otherwise, then pick for each $\xi<\kappa^+$, $\alpha_{1,\xi},\alpha_{2,\xi}<\kappa$ such that the cardinality is at most $\kappa$. Stabilize these values on a set $X$ of size $\kappa^+$ with the values $\alpha_1^*,\alpha_2^*$.
The set $H_{\xi,\l \alpha_1^*,\alpha_2^*\r}$ is determined from $ A^{(0)}_{\xi}\cap\alpha_1^*$ and $\l A^{(1)}_{\xi,\beta}\cap \alpha^*_2\mid \beta<\alpha^*_1\r$. Since there are less than $\kappa$ many such sequences, we have a set $X^*$ of size $\kappa^+$ such that for any $\xi\in X^*$, $H_{\xi,\l\alpha^*_1,\alpha^*_2\r}=H$ but then $X^*\subseteq H$, contradiction.
\end{proof}
 Let $\xi^*<\kappa^+$ be as in  Claim \ref{Claim: Glavin's claim}. Since all the ultrafilters are $p$-points, for each $T\in \{U\}\cup\{U_\alpha\mid\alpha<\kappa\}$ there is a set $B^{(T)}\in T$ such that for every $j<\kappa$, there is $\rho^{(j)}_{T}$, such that $$\sup(\pi_{T}^{-1}[j+1]\cap B^{(T)}),j<\rho^{(j)}_{T}<\kappa$$ where $[\pi_T]_T=\kappa$. For $\alpha,j<\kappa$, let $\delta^{(j)}_\alpha=\sup( \rho^{(j)}_{U_\beta}\mid \beta<\alpha)<\kappa$.
Define the sequence $\l\beta_j\r_{j<\kappa}$ by induction, $$\beta_j\in H_{\xi^*,\l \rho^{(j)}_U,\delta^{(j)}_{\rho^{(j)}_U}\r}\setminus\{\beta_k\mid k<j\}.$$
The following claim will complete the proof of the theorem.\begin{claim}\label{claim: the galvin witness} $\bigcap_{j<\kappa} A_{\beta_j}\in \sum_{U,\alpha} U_\alpha$.\end{claim}
\begin{proof}[\textit{Proof of Claim \ref{claim: the galvin witness}}.] Set $$C^{(0)}:=A^{(0)}_{\xi^*}\cap \Delta^{*,\pi_U}_{j<\kappa} A^{(0)}_{\beta_j}\cap B^{(U)}\in U.$$
and for each $\beta\in C^{(0)}$, and each $j<\kappa$ we let $$B^{(1)}_{\beta_j,\beta}=\begin{cases} A^{(1)}_{\beta_j,\beta} & \beta\in A^{(0)}_{\beta_j}\\
\kappa & o.w.\end{cases}$$
Hence for every $j<\kappa$, $B^{(1)}_{\beta_j,\beta}\in U_\beta$. Define
$$C_\beta=\Big(A^{(1)}_{\xi^*,\beta}\cap \Delta^{*}_{j<\kappa} B^{(1)}_{\beta_j,\beta}\cap B^{(U_\beta)}\Big)\setminus \rho^{(\beta)}_{U_{\beta}}+1\in U_\beta.$$
Note here that the modified diagonal intersection is taken with respect to the function $\pi_{U_\beta}$.
Finally we let $C^*=\bigcup_{\beta\in C^{(0)}}\{\beta\}\times C_\beta$. Clearly, $C^*\in \sum_{U,\beta} U_\beta$. So it suffices to see that $C^*\subseteq \bigcap_{j<\kappa}A_{\beta_j}$.

Let $\l \alpha_1,\alpha_2\r\in C^*$, then $\alpha_1\in C^{(0)}$ and $\alpha_2\in C_{\alpha_1}$. By definition of $C_{\alpha_1}$, $\alpha_2>\rho^{(\alpha_1)}_{U_{\alpha_1}}$, and $\alpha_2\in B^{(U_{\alpha_1})}$. It follows that $$(*) \ \ \ \pi_{U_{\alpha_1}}(\alpha_2)>\alpha_1\geq \pi_U(\alpha_1).$$ Let $j<\kappa$, and let us split into cases:
\begin{enumerate}
    \item If $j<\pi_U(\alpha_1)$, then by the definition of modified diagonal intersection, $\alpha_1\in A^{(0)}_{\beta_j}$. By the definition of $B^{(1)}_{\beta_j,\alpha_1}$,  $B^{(1)}_{\beta_j,\alpha_1}=A^{(1)}_{\beta_j,\alpha_1}$. By $(*)$, $j<\pi_U(\alpha_1)<\pi_{U_{\alpha_1}}(\alpha_2)$ and therefore $\alpha_2\in B^{(1)}_{\beta_j,\alpha_1}=A^{(1)}_{\beta_j,\alpha_1}$. It follows that $\l \alpha_1,\alpha_2\r\in A_{\beta_j}$.
    \item If $\pi_U(\alpha_1)\leq j$, then $\alpha_1<\rho^{(j)}_U$, and so $\alpha_1\in C^{(0)}\cap \rho^{(j)}_U\subseteq A^{(0)}_{\xi^*}\cap\rho^{(j)}_U$. Since $\beta_j\in H_{\xi^*,\l\rho^{(j)}_U,\delta^{(j)}_{\rho^{(j)}_U}\r}$ it follows that $\alpha_1\in A^{(0)}_{\beta_j}\cap\rho^{(j)}_U.$ We conclude, once again, that $B^{(1)}_{\beta_j,\alpha_1}=A^{(1)}_{\beta_j,\alpha_1}$. Let us split again to cases according to the relation of $j$ and $\pi_{U_{\alpha_1}}(\alpha_2)$:
    \begin{enumerate}
        \item [(2a)] If $j<\pi_{U_{\alpha_1}}(\alpha_2)$, then by definition of the modified diagonal intersection, $\alpha_2\in B^{(1)}_{\beta_j,\alpha_1}=A^{(1)}_{\beta_j,\alpha_1}$. And again we conclude that $\l \alpha_1,\alpha_2\r\in A_{\beta_j}$.
        \item [(2b)] If $\pi_{U_{\alpha_1}}(\alpha_2)\leq j$, then by definition of $\rho^{(j)}_{U_{\alpha_1}}$, $\alpha_2<\rho^{(j)}_{U_{\alpha_1}}$ and $$(**) \ \ \ \alpha_2\in C_{\alpha_1}\cap \rho^{(j)}_{U_{\alpha_1}}\subseteq A^{(1)}_{\xi^*,\alpha_1}\cap \rho^{(j)}_{U_{\alpha_1}}. 
$$ Also by $(*)$ and the choice of $\rho^{(j)}_U$, $\alpha_1<\pi_{U_{\alpha_1}}(\alpha_2)\leq j<\rho^{(j)}_{U}$ and hence $\rho^{(j)}_{U_{\alpha_1}}
\leq\delta^{(j)}_{\rho^{(j)}_U}$ . Since $\beta_j\in H_{\xi^*,\l \rho^{(j)}_U,\delta^{(j)}_{\rho^{(j)}_U}\r}$, we conclude that for every $\beta<\rho^{(j)}_U$ (and in particular for $\alpha_1$) $A^{(1)}_{\xi^*,\beta}\cap \delta^{(j)}_{\rho^{(j)}_U}=A^{(1)}_{\beta_j,\beta}\cap \delta^{(j)}_{\rho^{(j)}_U}$.
It follows from $(**)$ that $\alpha_2\in A^{(1)}_{\beta_j,\alpha_1}\cap \delta^{(j)}_{\rho^{(j)}_{U}}$, hence $\l\alpha_1,\alpha_2\r\in A_{\beta_j}$. 
    \end{enumerate}
\end{enumerate}  We conclude that in any case $\l\alpha_1,\alpha_2\r \in A_{\beta_j}$.
This establishes that $C^*\subseteq \bigcap_{j<\kappa}A_{\beta_j}$. \end{proof}$\qedhere_{\text{theorem }\ref{limit of ppoints}}$
\end{proof}
\begin{corollary}\label{cor ppoint on kappa}
 Suppose $W\equiv_{RK}lim_U U_\alpha$ where $U,U_\alpha$ are all $p-$points on $\kappa$, then $W$ is Galvin.
 \end{corollary}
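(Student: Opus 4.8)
The plan is to derive this directly from Theorem~\ref{limit of ppoints} together with the two Facts recorded above, so no new combinatorics is needed --- only some Rudin--Keisler bookkeeping. First observe that since every $U_\alpha$ is an ultrafilter over $\kappa$, we have $\delta_\alpha=\kappa$ for all $\alpha<\kappa$, so $\sum_U U_\alpha$ is an ultrafilter over $\kappa\times\kappa$ of exactly the shape to which Theorem~\ref{limit of ppoints} applies. Moreover, by the Fact that the right projection $\pi_2\colon\kappa\times\kappa\to\kappa$ is a Rudin--Keisler projection of $\sum_U U_\alpha$ onto $lim_U U_\alpha$ --- indeed $(\pi_2^{-1}[X])_\alpha=X\cap\delta_\alpha$, so $\pi_2^{-1}[X]\in\sum_U U_\alpha$ iff $\{\alpha<\kappa\mid X\cap\delta_\alpha\in U_\alpha\}\in U$ iff $X\in lim_U U_\alpha$ --- the limit ultrafilter is a Rudin--Keisler projection of the sum, i.e. $lim_U U_\alpha\leq_{RK}\sum_U U_\alpha$.

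Now apply Theorem~\ref{limit of ppoints} with $W:=\sum_U U_\alpha$: since $U$ and all the $U_\alpha$ are $p$-points on $\kappa$, the theorem gives that $\sum_U U_\alpha$ has the Galvin property. By Fact~\ref{Factfactor} the Galvin property is inherited downward along $\leq_{RK}$, so from $lim_U U_\alpha\leq_{RK}\sum_U U_\alpha$ we conclude that $lim_U U_\alpha$ is Galvin. Finally, since $W\equiv_{RK}lim_U U_\alpha$ we in particular have $W\leq_{RK}lim_U U_\alpha$, and one more application of Fact~\ref{Factfactor} yields that $W$ is Galvin. The same reduction, combined with the remark that the argument of Theorem~\ref{limit of ppoints} itself iterates, gives the corresponding statement for ultrafilters obtained as a finite iteration of $p$-point limits (by induction: a limit of such ultrafilters is again an $\leq_{RK}$-projection of the corresponding sum, which is Galvin by the iterated version of Theorem~\ref{limit of ppoints}).

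There is no genuine obstacle here: all of the content sits in Theorem~\ref{limit of ppoints}, and the corollary is a formal consequence once one notices that a $U$-limit of the $U_\alpha$ is always a Rudin--Keisler projection of the corresponding $U$-sum and that the Galvin property descends along Rudin--Keisler projections. The only points demanding (minor) care are checking that the hypotheses of Theorem~\ref{limit of ppoints} are literally met --- every ultrafilter in play lives on $\kappa$ and is a $p$-point --- and using the two Rudin--Keisler reductions in the correct direction, namely that $\sum_U U_\alpha$ projects onto $lim_U U_\alpha$ and that $lim_U U_\alpha$ projects onto $W$.
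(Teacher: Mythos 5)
Your proof is correct and follows exactly the paper's own argument: apply Theorem~\ref{limit of ppoints} to $\sum_U U_\alpha$, note that $\pi_2$ witnesses $lim_U U_\alpha\leq_{RK}\sum_U U_\alpha$, and use the downward preservation of the Galvin property along $\leq_{RK}$ (Fact~\ref{Factfactor}). The extra bookkeeping you include (verifying $\delta_\alpha=\kappa$ and the final step through $W\equiv_{RK}lim_U U_\alpha$) is implicit in the paper's two-line proof but adds nothing different in substance.
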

 \begin{proof}
Follows directly from Facts \ref{fact:RK projection of sum},\ref{fact:galvin downward closed}. \end{proof}

The argument in Theorem \ref{limit of ppoints} generalizes to ultrafilters which are obtained by taking a $p$-point sum of $p$-point sums of $p$-points, or a $p$-point sum of $p$-point sums of $p$-point sums of $p$-points. This goes on finitely many stages. Let us be more precise and define what an \textit{$n$-fold sum of $p$-points} is:
\begin{definition}\label{Def: n-fold}
     Let $W$ be a $\kappa$-complete ultrafilter over $\kappa$. $W$ is called a $1$-fold sum of $p$-points over $\kappa$ if $W$ is a $p$-point ultrafilter over $\kappa$. We proceed recursively on $1\leq n<\omega$, we say that $W$ is an $n+1$-fold sum of $p$-points over $\kappa$ if $W$ is a $\kappa$-complete ultrafilter over $\kappa$ which is Rudin-Keisler equivalent to an ultrafilter of the form $\sum_{U,\alpha}U_\alpha$, where $U$ is an $n$-fold sum of $p$-points over $\kappa$, and for each $\alpha$, $U_\alpha$ is a $p$-point over $\delta_\alpha\leq\kappa$. We say that $W$ is an \textit{iterated sum of $p$-points over $\kappa$} if there is $1\leq n<\omega$ such that $W$ is an $n$-fold sum of $p$-points over $\kappa$.
\end{definition}
Note that $W$ being an iterated sum of $p$-points over $\kappa$ is equivalent to $W$ being Rudin-Keisler equivalent to an ultrafilter taking the form $$\sum_{U,\alpha_1}\sum_{U_{\alpha_1},\alpha_2}...\sum_{U_{\alpha_1,...,\alpha_{n-2}},\alpha_{n-1}}U_{\alpha_1,...,\alpha_{n-1}}$$ where $U$ is a $p$-point over $\kappa$ and each $U_{\alpha_1,...,\alpha_k}$ is a $p$-point over some $\delta_{\alpha_1,..,\alpha_k}\leq\kappa$ for every $1\leq k\leq n$. If moreover all the $\delta_{\alpha_1,...,\alpha_n}=\kappa$, then we say that $W$ is a \textit{simple iterated sum of $p$-points over $\kappa$}.

The proof of the following theorem is completely analogous to the one of Theorem $\ref{limit of ppoints}$ with only the notations being (even) more complicated.
\begin{theorem}\label{Theorem: general n-fold of kappa p-points}
    Suppose that $W$ is a simple iterated sum of $p$-points over $\kappa$. Then $W$ has the Galvin property. 
\end{theorem}
\begin{proof}
     Since the Galvin property is invariant under Rudin-Keisler equivalence, and $W$ is a simple iterated sum of $p$-points,  we may assume that $W$ take the form $$W=\sum_{U,\alpha_1}\sum_{U_{\alpha_1},\alpha_2}...\sum_{U_{\alpha_1,...,\alpha_{n-2}},\alpha_{n-1}}U_{\alpha_1,...,\alpha_{n-1}}$$
     where each $T\in\{U\}\cup\{U_{\alpha_1,...,\alpha_k}\mid 0\leq k\leq n-1, \ \alpha_1,...,\alpha_k<\kappa\}$ is a $p$-point ultrafilter over $\kappa$. 
     
    By induction, one proves that $W$ is an ultrafilter over $[\kappa]^n$ and for every $X\subseteq [\kappa]^n$
    $X\in W$ iff
    $$(*) \ \ \ \ \Big\{\alpha_1<\kappa\mid \{\alpha_2<\kappa\mid...\{ \alpha_n<\kappa\mid \l \alpha_1,...,\alpha_n\r\in X\}\in U_{\alpha_1,...,\alpha_{n-1}}...\}\in U_{\alpha_1,\alpha_2}\}\in U_{\alpha_1}\Big\}\in U.$$
 Let $\l A_i\mid i<\kappa^+\r\in [W]^{\kappa^+}$. For any $i<\kappa^+$, and $\l\alpha_1,...,\alpha_{n-1}\r\in [\kappa]^{n-1}$, we denote $$A^{(n-1)}_{i,\alpha_1,...,\alpha_{n-1}}=\{\alpha\mid \l\alpha_1,...,\alpha_{n-1},\alpha\r\in A_i\},$$
 $$A^{(n-2)}_{i,\alpha_1,...,\alpha_{n-2}}=\{\alpha\mid A^{(n-1)}_{i,\alpha_1,...,\alpha_{n-2},\alpha}\in U_{\alpha_1,...,\alpha_{n-2},\alpha}\}.$$
 We proceed recursively to define for each $0\leq k\leq n-2$,
 $$A^{(k)}_{i,\alpha_1,...,\alpha_k}=\{\alpha\mid A^{(k+1)}_{i,\alpha_1,...,\alpha_k,\alpha}\in U_{\alpha_1,...,\alpha_k,\alpha}\}.$$
 For notational reasons, the sequence $\l \alpha_1,...,\alpha_{k}\r$ where $k=0$ is just the empty sequence. In particular, we have defined above 
 $$A^{(0)}_{i}=\{\alpha\mid A^{(1)}_{i,\alpha}\in U_\alpha\}.$$
 Note that the set in $(*)$ is simply $A^{(0)}_{i}$. For every $\xi<\kappa^+$ and $\alpha<\kappa$ define
 $$H_{\xi,\alpha}=\{\gamma<\kappa^+\mid \forall 0\leq k\leq n-1, \forall \alpha_1,...,\alpha_k<\alpha, \ A^{(k)}_{\gamma,\alpha_1,..,\alpha_k}\cap\alpha=A^{(k)}_{\xi,\alpha_1,..,\alpha_k}\cap\alpha\}$$
 Then as in Claim \ref{Claim: Glavin's claim}, we can prove that there is $\xi^*<\kappa^+$ such that for every $\alpha$, $|H_{\xi^*,\alpha}|=\kappa^+$. 
 For each $T\in \{U\}\cup \{U_{\alpha_1,...,\alpha_k}\mid 0\leq k\leq n-1, \ \alpha_1,...,\alpha_k<\kappa\}$ we define $\rho^{(j)}_T$ as in the proof of Theorem \ref{limit of ppoints}.
 
 Then we define $$\delta_j=\sup\{\rho^{(j)}_{U_{\alpha_1,...,\alpha_k}}\mid 0\leq k\leq n-1, \  \alpha_1,...,\alpha_k\leq j\}$$
 
 Now choose as before $\beta_j\in H_{\xi^*,\delta_{j}}\setminus\{\beta_i\mid i<j\}$.
 Finally we show that $\bigcap_{j<\kappa}A_{\beta_j}\in W$ by constructing a subset of it $C^*$, which is in $W$. The definition of $C^{(0)}$ is exactly as before. 
 %Define for each $\gamma\in C^{(0)}$, $C^{(1)}_\gamma=C_\gamma$.
 In general, suppose that $\gamma_1\in C^{(0)},\gamma_2\in C^{(1)}_{\gamma_1}$,... $\gamma_{k+1}\in C^{(k)}_{\gamma_1,...,\gamma_{k}}$, and $0\leq k<n-1$ define first $$B^{(k+1)}_{\beta_j,\gamma_1,...,\gamma_{k+1}}=\begin{cases}
     A^{(k+1)}_{\beta_j,\gamma_1,...,\gamma_{k+1}} & \gamma_{k+1}\in A^{(k)}_{\beta_j,\gamma_1,...,\gamma_{k}}\\
     \kappa & o.w.\end{cases}$$
and $r_{\gamma_1,...,\gamma_{k+1}}=\max\{\rho^{\gamma_i}_{U_{\gamma_1,...,\gamma_{k+1}}}\mid 1\leq i\leq k+1\}$. Then $$C^{(k+1)}_{\gamma_1,...,\gamma_{k+1}}=\big(A^{(k+1)}_{\xi^*,\gamma_1,...,\gamma_{k+1}}\cap \Delta^*_{j<\kappa}B^{(k+1)}_{\beta_j,\gamma_1,...,\gamma_{k+1}}\cap B^{(U_{\gamma_1,..,\gamma_{k+1}})}\big)\setminus r_{U_{\gamma_1,...,\gamma_{k+1}}}+1\in U_{\gamma_1,...,\gamma_{k+1}}$$
The set $C^*$ is define as
$$C^*=\bigcup_{\gamma_1\in C^{(0)}}\bigcup_{\gamma_2\in C^{(1)}_{\gamma_1}}...\bigcup_{\gamma_{n-1}\in C^{(n-2)}_{\gamma_1,..,\gamma_{n-2}}}\{\gamma_1\}\times\{\gamma_2\}\times...\times \{\gamma_{n-1}\}\times C^{(n-1)}_{\gamma_1,...,\gamma_{n-1}}$$
Then we conclude that $C^*\in W$. Now take any $\l\gamma_1,...,\gamma_{n}\r\in C^*$, then $\gamma_n\in C^{(n-1)}_{\gamma_1,...,\gamma_{n-1}}$ and for every $1\leq k\leq n$, $\gamma_{k}\in C^{(k-1)}_{\gamma_1,...,\gamma_{k-1}}$. 
It follows that
$\gamma_k>r_{\gamma_1,...,\gamma_{k-1}}\geq\rho^{\gamma_{i}}_{U_{\gamma_1,...,\gamma_{k-1}}}$ for all $1\leq i\leq k-1$. Hence $$(**)\ \ \pi_{U_{\gamma_1,...,\gamma_{k-1}}}(\gamma_k)>\gamma_1,...,\gamma_{k-1}.$$

We prove inductively on $1\leq i\leq n$, that for every $j<\kappa$, $\gamma_i\in A^{(i-1)}_{\beta_j,\gamma_1,...,\gamma_{i-1}}$. This will implies that $\l \gamma_1,...\gamma_n\r\in\bigcap_{j<\kappa}A_{\beta_j}$. 

For $i=1$, let $j<\kappa$. If $j<\pi_U(\gamma_1)$, then by the definition of modified diagonal intersection, $\gamma_1\in A^{(0)}_{\beta_j}$. If $j\geq \pi_U(\gamma_1)$, then $\gamma_1<\rho^{(j)}_U\leq\delta_j$. In which case $\gamma_1\in A^{(0)}_{\xi^*}\cap \delta_j$, and since $\beta_j\in H_{\xi^*,\delta_j}$, $A^{(0)}_{\xi^*}\cap \delta_j=A^{(0)}_{\beta_j}\cap \delta_j$ which in turn implies that $\gamma_1\in A^{(0)}_{\beta_j}$.

Now suppose that $\gamma_{k}\in A^{(k-1)}_{\beta_j,\gamma_1,...,\gamma_{k-1}}$. In particular, by definition, $B^{(k)}_{\beta_j,\gamma_1,...,\gamma_k}=A^{(k)}_{\beta_j,\gamma_1,...,\gamma_k}$. Let us prove that $\gamma_{k+1}\in A^{(k)}_{\beta_j,\gamma_1,...,\gamma_k}$. Again, let us split into cases. If $j<\pi_{U_{\gamma_1,...\gamma_k}}(\gamma_{k+1})$, by the definition of the modified diagonal intersection, $\gamma_{k+1}\in B^{(k)}_{\beta_j,\gamma_1,...,\gamma_k}=A^{(k)}_{\beta_j, \gamma_1,...,\gamma_k}$. So suppose that $j\geq \pi_{U_{\gamma_1,...,\gamma_k}}(\gamma_{k+1})$, this means that $\gamma_{k+1}<\rho^{(j)}_{U_{\gamma_1,...,\gamma_k}}$. By $(**)$, $\gamma_1,...,\gamma_k<\pi_{U_{\gamma_1,...,\gamma_k}}(\gamma_{k+1})\leq j$. By definition of $\delta_j$, $\rho^{(j)}_{U_{\gamma_1,...,\gamma_k}}\leq\delta_j$, hence $\gamma_{k+1}\in A^{(k)}_{\xi^*,\gamma_1,...,\gamma_k}\cap \delta_j$. Since $\beta_j\in H_{\xi^*,\delta_j}$, and since $\gamma_1,...,\gamma_k<\delta_j$, $A^{(k)}_{\beta_j,\gamma_1,...,\gamma_k}\cap \delta_j=A^{(k)}_{\xi^*,\gamma_1,...,\gamma_k}\cap \delta_j$, and therefore $\gamma_{k+1}\in A^{(k)}_{\beta_j,\gamma_1,...,\gamma_k}$.
%The base case $i=1$ is identical to the previous proof. For an general $i$, $\alpha_i\in C^{(i-1)}_{\alpha_1,...,\alpha_{i-1}}$. Hence $\pi_{U_{\alpha_1,...,\alpha_{i-1}}}(\alpha_i)>?$
%we have that assuming that $\alpha_{i-1}\in A^{(i-1)}_{\beta_j,\alpha_1,..,\alpha_{i-2}}$ we have that  
\end{proof}
 
 Next, we deal with non-simple iterated sums of $p$-points. %Denote by $\delta_\alpha\leq\kappa$ the ordinal for which $U_\alpha$ is an ultrafilter over and $\pi_U:\kappa\rightarrow \kappa$ the function representing $\kappa$ in $M_U$. 
 \begin{lemma}\label{Lemma: when a sum is a p-point}
 Suppose that $U$ is a $p$-point over $\kappa$, $\pi_U:\kappa\rightarrow\kappa$ is a function such that $[\pi_U]_U=\kappa$, and 
 for every $\alpha<\kappa$, $U_\alpha$ is a $p$-point ultrafilter over $\delta_\alpha$, where $\pi_U(\alpha)<\delta_\alpha<\kappa$. Then $\sum_{U,\alpha}U_\alpha\equiv_{RK} W$ where $W$ is a $p$-point ultrafilter.
 \end{lemma}
 \begin{proof}
 Let $W$ be any ultrafilter over $\kappa$ such that $\sum_{U,\alpha}U_\alpha\equiv_{RK}W$, let us prove that $W$ is a $p$-point. Note that by Rudin-Keisler equivalence, $M_W=M_{\sum_{U,\alpha}U_\alpha}$ and $j_W=j_{\sum_{U,\alpha}U_\alpha}$. Let $U^*=[\alpha\mapsto U_\alpha]_U$, then $U^*\in M_U$ and $U^*$ is an ultrafilter over $$\kappa=[\pi_U]_U<\kappa^*=[\alpha\mapsto \delta_\alpha]_U<j_U(\kappa).$$ By fact \ref{Factfactor}, $j_W=j_{\sum_{U,\alpha}U_\alpha}$ can be factored into $j_{U^*}\circ j_U$. Now since $j_U(\kappa)$ is a measurable cardinal in $M_U$, and $\kappa^*=crit(j_{U^*})<j_U(\kappa)$, it follows that $j_W(\kappa)=j_{U^*}(j_U(\kappa))=j_U(\kappa)$. To see that $W$ is a $p$-point, let us use Lemma \ref{kanamoriequiv}, let $f:\kappa\rightarrow\kappa$ be any function such that $[f]_W<j_W(\kappa)=j_U(\kappa)$. Since $U$ is a $p$-point, find some monotone function $g:\kappa\rightarrow \kappa$ such that $[f]_W\leq j_U(g)(\kappa)$. It follows that $[f]_W\leq j_{U^*}(j_U(g)(\kappa))=j_W(g)(j_{U^*}(\kappa))$. Finally, since $\kappa^*>\kappa$ it follows that $j_{U^*}(\kappa)=\kappa$, so $[f]_W\leq j_W(g)(\kappa)$ as wanted.  
 %Now  such that  Since $M_U\models U^*$ is $p-point,$ There is $g\in M_U$ such that $g:\kappa^*\rightarrow \kappa^*$ such that $j_{U^*}(g)(\kappa^*)\geq [f]_W$. Since $\kappa^*$ is regular in $M_U$ we can assume that $g$ is monotone. 
 %Such that $j_{U^*}(g)(\kappa^*)\geq [f]_W$. Next, since $U$ is $p$-point and since $\kappa^*<j_U(\kappa)$, there is $g^*$ such that $j_U(g^*)(\kappa)\geq \kappa^*$. Also, let $[G]_U=g^*$, then for every $\alpha<\kappa$, $G(\alpha):\delta_\alpha\rightarrow \delta_\alpha$. Define in $V$, $h(\alpha)=\sup(G(\beta)$ It follows that $[f]_W\leq j_{U^*}(g)(\kappa^*)\leq j_{U^*}(g)(j_U(...$ 
 \end{proof}
 \begin{theorem}\label{general ppoint}
Let $W$ be a $\kappa$-complete ultrafilter over $\kappa$. Suppose that $W\equiv_{RK}\sum_{U,\alpha} U_\alpha$ where $U$ is a $p$-point ultrafilter over $\kappa$ and for each $\alpha<\kappa$, $U_\alpha$ is a $p$-point ultrafilter over some $\delta_\alpha\leq\kappa$. Then $W$ has the Galvin property. 
\end{theorem}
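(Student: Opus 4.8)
The plan is to observe that the combinatorial argument from the proof of Theorem \ref{limit of ppoints} adapts, with one small modification, to ultrafilters $U_\alpha$ over arbitrary ordinals $\delta_\alpha\le\kappa$; in two of the three cases of the trichotomy above there is moreover a quicker route. Since $W\equiv_{RK}\Sigma_U U_\alpha$ and the Galvin property is $\equiv_{RK}$-invariant by Fact \ref{Factfactor} (applied both ways), it suffices to show $\Sigma_U U_\alpha$ is Galvin, and $\Sigma_U U_\alpha$ is unchanged if $\alpha\mapsto U_\alpha$ is altered off a set in $U$. If alternative (1) of the trichotomy holds $U$-almost everywhere, redefine the remaining $U_\alpha$ to be a fixed $p$-point over $\kappa$; then all $U_\alpha$ are $p$-points over $\kappa$ and $\Sigma_U U_\alpha$ is Galvin by Theorem \ref{limit of ppoints}. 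If alternative (2) holds $U$-almost everywhere, then by the preceding Lemma (whose proof uses only the behaviour of $\alpha\mapsto U_\alpha$ modulo $U$) $\Sigma_U U_\alpha\equiv_{RK}W'$ with $W'$ a $p$-point over $\kappa$; as $\kappa$ is measurable, hence strongly inaccessible, we have $\kappa^{<\kappa}=\kappa$, and $W'$ --- hence $\Sigma_U U_\alpha$ --- is Galvin by Proposition \ref{galvinGen}.

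There remains alternative (3), $\delta_\alpha=\pi_U(\alpha)$ $U$-almost everywhere (the argument below actually works for any choice of the $\delta_\alpha\le\kappa$), where one re-runs the proof of Theorem \ref{limit of ppoints} over the base $\{\langle\alpha,\beta\rangle:\beta<\delta_\alpha\}$. Given $\langle A_i\mid i<\kappa^+\rangle\in[\Sigma_U U_\alpha]^{\kappa^+}$ one sets $A^{(1)}_{i,\alpha}=\{\beta<\delta_\alpha:\langle\alpha,\beta\rangle\in A_i\}$ and $A^{(0)}_i=\{\alpha:A^{(1)}_{i,\alpha}\in U_\alpha\}\in U$, defines the stabilizing sets $H_{\xi,\langle\alpha_1,\alpha_2\rangle}$ exactly as there, fixes $\xi^*<\kappa^+$ with $|H_{\xi^*,\langle\alpha_1,\alpha_2\rangle}|=\kappa^+$ for all $\alpha_1,\alpha_2$ (the counting bound uses only that $\kappa$ is a strong limit), and constructs $\langle\beta_j\mid j<\kappa\rangle$ diagonally through the $H$-sets so that the set $C^*\in\Sigma_U U_\alpha$ built as in Theorem \ref{limit of ppoints} is contained in $\bigcap_{j<\kappa}A_{\beta_j}$. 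The only genuine change is in the $p$-point witnesses: for $W\in\{U\}\cup\{U_\alpha\mid\alpha<\kappa\}$ one picks $B^{(W)}\in W$ and ordinals $\rho^{(j)}_W>j,\sup(\pi_W^{-1}[j]\cap B^{(W)})$, where $[\pi_W]_W$ is the underlying ordinal of $W$; for $U$ the $p$-point property supplies such $\rho^{(j)}_U<\kappa$ for all $j<\kappa$, whereas for $U_\alpha$ it does so only for $j<\delta_\alpha$, so when $\delta_\alpha\le j<\kappa$ one simply sets $\rho^{(j)}_{U_\alpha}:=\delta_\alpha$. One then verifies that $\delta^{(j)}_\alpha:=\sup(\rho^{(j)}_{U_\beta}\mid\beta<\alpha)$ is still $<\kappa$, and that the case splits in the concluding verification (comparing $j$ with $\pi_U(\alpha_1)$, and with $\pi_{U_{\alpha_1}}(\alpha_2)$) still deliver $\langle\alpha_1,\alpha_2\rangle\in A_{\beta_j}$; the key point is that $\alpha_2<\delta_{\alpha_1}$ automatically, so the modified value of $\rho^{(j)}_{U_{\alpha_1}}$ is never needed in a way that would force it above $\kappa$.

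The main obstacle is bookkeeping rather than conceptual: one has to confirm that the substitution $\rho^{(j)}_{U_\alpha}:=\delta_\alpha$ (for $j\ge\delta_\alpha$) leaves every step of the rather intricate construction and verification of Theorem \ref{limit of ppoints} intact --- in particular that each ordinal that must be bounded in $\kappa$ genuinely is. It is worth noting that in case (3) the sum $\Sigma_U U_\alpha$ need not be Rudin--Keisler equivalent to a $p$-point (a direct computation shows $\sup\{\,j_{\Sigma_U U_\alpha}(g)(\kappa)\mid g\colon\kappa\to\kappa\,\}$ can fall short of $j_{\Sigma_U U_\alpha}(\kappa)$, so Proposition \ref{kanamoriequiv} fails), so --- unlike in cases (1) and (2) --- there is no shortcut through ``$p$-point $\Rightarrow$ Galvin'' and the combinatorial argument is genuinely required.
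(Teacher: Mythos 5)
Your cases (1) and (2) coincide with the paper's proof (modify $U_\alpha$ off a $U$-large set and apply Theorem \ref{limit of ppoints}, respectively invoke the preceding lemma), but your case (3) has a genuine gap. In that case $[\pi_U]_U=\kappa\leq[id]_U$ forces $\delta_{\alpha}=\pi_U(\alpha)\leq\alpha$ for $U$-almost all $\alpha$, and this breaks the adapted combinatorial argument in two places. First, the set $C_\beta=\bigl(A^{(1)}_{\alpha^*,\beta}\cap \Delta^*_{j<\kappa}A^{(1)}_{\beta_j,\beta}\cap B^{(U_\beta)}\bigr)\setminus\rho^{(\beta)}_{U_\beta}$ must lie in $U_\beta$; but everything inside the parentheses is a subset of $\delta_\beta$, while your substitution gives $\rho^{(\beta)}_{U_\beta}=\delta_\beta$ (since $\beta\geq\delta_\beta$), so $C_\beta=\emptyset$. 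Second, and more fundamentally, the verification hinges on $\pi_{U_{\alpha_1}}(\alpha_2)>\alpha_1$: that is what converts the hypothesis $\pi_{U_{\alpha_1}}(\alpha_2)\leq j$ into $\alpha_1<j<\rho^{(j)}_U$, which is needed both so that the stabilization on $H_{\alpha^*,\langle\rho^{(j)}_U,\delta^{(j)}_{\rho^{(j)}_U}\rangle}$ applies to the index $\alpha_1<\rho^{(j)}_U$ and so that $\rho^{(j)}_{U_{\alpha_1}}$ is counted in the supremum defining $\delta^{(j)}_{\rho^{(j)}_U}$. Since $\pi_{U_{\alpha_1}}$ maps into $\delta_{\alpha_1}=\pi_U(\alpha_1)\leq\alpha_1$, the inequality $\pi_{U_{\alpha_1}}(\alpha_2)>\alpha_1$ is unattainable, and for indices $j$ with $\pi_{U_{\alpha_1}}(\alpha_2)\leq j<\pi_U(\alpha_1)$ neither the modified diagonal intersection nor the $H$-set stabilization yields $\alpha_2\in A^{(1)}_{\beta_j,\alpha_1}$. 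So the issue is not, as you suggest, merely whether certain ordinals stay below $\kappa$; the mechanism that pushes the second coordinate above the first is structurally unavailable when the second coordinate lives below $\delta_{\alpha_1}\leq\alpha_1$.

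Your closing claim that in case (3) there is no shortcut and the combinatorial argument is genuinely required is therefore doubly mistaken: the adapted argument does not go through, and the paper's proof of this case is precisely a shortcut. Namely, $U^*=[\alpha\mapsto U_\alpha]_U$ is an ultrafilter over $[\pi_U]_U=\kappa$ in $M_U$, and since $M_U$ is closed under $\kappa$-sequences, $U^*$ is a genuine $p$-point over $\kappa$ in $V$; then $\sum_UU_\alpha\equiv_{RK}U^*\times U$ is a finite product of $p$-points and is Galvin by Proposition \ref{galvinGen}. If you wish to keep a purely combinatorial treatment of case (3), you would need to reorganize the argument around the fact that the second coordinate sits entirely below the first, rather than trying to force it above.
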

\begin{proof}
Let $\pi_U:\kappa\rightarrow\kappa$ be a function such that $[\pi_U]_U=\kappa$. First note that there is a set $X\in U$ such that exactly one of the following holds:
 \begin{enumerate}
     \item For each $\alpha\in X$, $\delta_\alpha=\kappa$.
     \item For each $\alpha\in X$, $\pi_U(\alpha)<\delta_\alpha<\kappa$.
     \item For each $\alpha\in X$, $\pi_U(\alpha)=\delta_\alpha$.
 \end{enumerate}
 So let us split into cases according to the above:
\begin{enumerate}
    \item If for every $\alpha<\kappa$, $\delta_\alpha=\kappa$,   apply Theorem \ref{cor ppoint on kappa}.
    \item If for every $\alpha<\kappa$, $\pi_U(\alpha)<\delta_\alpha<\kappa$,  apply Lemma \ref{Lemma: when a sum is a p-point}.
    \item If for every $\alpha<\kappa$ $\pi_U(\alpha)=\delta_\alpha$, then $U^*=[\alpha\mapsto U_\alpha]_{U}$ is a $p$-point ultrafilter over $[\pi_U]_U=\kappa$ in $M_U$. Since $M_U$ is closed under $\kappa$-sequences, it follows that $U^*$ is $p$-point over $\kappa$ in $V$. So $\sum_{U,\alpha} U_\alpha\equiv_{RK} U^*\times U$, and by Theorem \ref{galvinGen}, a product of $p$-points is Galvin.
\end{enumerate}    
%next lemma, which says that in the case that $\delta_\alpha<\kappa$, $\sum_UU_\alpha$ is Rudin-Keisler equivalent to a $p$-point ultrafilter over $\kappa$, for which we can apply theorem \ref{galvinGen}.
\end{proof}
The argument above generalizes to iterated sums of $p$-points over $\kappa$:
\begin{theorem}\label{generalgeneral p-point}
    Let $W$ be a $\kappa$-complete ultrafilter over $\kappa$ which is an $n$-fold sum of $p$-points of ultrafilters over cardinals which are possibly less than $\kappa$. Then $W$ has the Galvin property. 
\end{theorem}
\begin{proof} The proof is by induction on $n$. Let $W$ be a $\kappa$-complete $n+1$-fold of $p$-points (not necessarily all over $\kappa$). First, note that $j_W$ can be factored as a finite iterated ultrapower $$V = M_0\overset{j_{0,1}}{\longrightarrow}M_1\overset{j_{1,2}}{\longrightarrow}\cdots\overset{j_{n,n+1}}{\longrightarrow}M_{n+1}=M_W$$
        where $j_{0,1}:V\rightarrow M_1$ is the ultrapower embedding of a $p$-point ultrafilter $U_0$ over $\kappa=\kappa_0$ and for every $1\leq k\leq n$, $j_{k,k+1}:M_k\rightarrow M_{k+1}$ is the ultrapower embedding associated to some $p$-point ultrafilter \(U_k\in M_{k}\) over some $\delta^*_k$, where  $\kappa\leq \delta^*_k\leq \kappa_{k}=j_{0,k}(\kappa)$. By rearranging the ultrafilters $U_k$, and by case $(3)$ in the proof of Theorem \ref{general ppoint}, we may assume that the sequence of critical points $\delta^*_k$ is strictly increasing. Let us split the proof into cases: 
        \begin{enumerate}
            \item If there is $0<k\leq n$ such that $\delta^*_{k-1}<\delta^*_k<\kappa_k$,  let $k_0$ be the least such $k$. Then, $\kappa_{k_0-1}=\delta^*_{k_0-1}<\delta^*_{k_0}<\kappa_{k_0}=j_{U_{k_0-1}}(\kappa_{k_0-1})$. Apply Lemma \ref{Lemma: when a sum is a p-point} inside $M_{k_0-1}$ and deduce that the two-step iteration $j_{k_0,k_0+1}\circ j_{k_0-1,k_0}=j_U$ for some $p$-point ultrafilter $U\in M_{k_0-1}$ over $\kappa_{k_0-1}$. The conclusion is that we can reduce $j_W$ to an $n$-step iteration and apply the induction hypothesis.
        \item  Otherwise, for each $k$, $\delta^*_k=\kappa_k$, in which case $W$ is an iterated sum of $p$-points over $\kappa$. Hence we may apply Theorem \ref{Theorem: general n-fold of kappa p-points} to conclude that $W$ is a Galvin ultrafilter. \end{enumerate} %Otherwise,  If $\delta^*_k=\delta^*_{k-1}=\kappa_{k-1}$, then as in the previous theorem, we can replace the two-step iteration $j_{k,k+1}\circ j_{k-1,k}$ with $j_{U^*\times U}=j_{j_{U^*}(U)}\circ j_{U^*}$, where both $U,U^*$ are $p$-points over $\kappa_{k-1}$. Now in the new iteration, which is still of length $n+1$, we will have that the first $k$ critical points are just the images of $\kappa$. Hence, in the corrected iteration, the minimal $k'$ such that $\delta^*_{k'}<\kappa_{\kappa'}$ will be at least $k+1$. In a similar fashion, we keep correcting the iteration and increasing $k'$ as we just did, until we have exhausted the entire (finite) iteration. This means that in the final corrected iteration, we will have that for every $k$, $\delta^*_k=\kappa_k$ which again falls under the first case in which the corresponding $n+1$-fold sum consists only of $p$-points over $\kappa$. We conclude that in any case $W$ is a Galvin ultrafilter.
  %  \begin{itemize}
%        \item  we may assume that the critical points of the finite iterated ultrapower corresponding to $W$ embedding are not decreasing.
 %       \item If all the critical points are the images of $\kappa$, then 
 %   \end{itemize} By , if all the ultrafilters are on $\kappa$, then the $n$-fold sum is Galvin. If 
\end{proof}
\section{Galvin ultrafilters in canonical inner models}
In this section, we wish to apply the results from the previous section to study the Galvin property in canonical inner models. The first result in this direction was done in \cite{TomMotiII}:
\begin{proposition}
 Let $U$ be a normal measure over $\kappa$. Then in $L[U]$, every $\kappa$-complete (even $\sigma$-complete) ultrafilter is a Galvin ultrafilter.
\end{proposition}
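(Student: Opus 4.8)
The plan is to reduce, via the classical structure theory of $\sigma$-complete ultrafilters in $L[U]$, to the Galvin property for finite products of $p$-points already established above. First I would recall two standard facts about $L[U]$: it satisfies $\mathrm{GCH}$ (so in particular $\kappa^{<\kappa}=\kappa$ holds there), and $\kappa$ is the unique measurable cardinal of $L[U]$, with $U$ its unique normal measure. Now let $W$ be a $\sigma$-complete ultrafilter of $L[U]$; the principal case is immediate (a principal ultrafilter is trivially Galvin), so assume $W$ is nonprincipal. The completeness of a nonprincipal $\sigma$-complete ultrafilter, i.e.\ the least $\delta$ for which the index set partitions into $\delta$-many $W$-null pieces, is always an uncountable measurable cardinal (push $W$ forward along such a partition), so by uniqueness it equals $\kappa$; hence $W$ is $\kappa$-complete. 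By Kunen's analysis of the iterated ultrapowers of $L[U]$, every $\kappa$-complete ultrafilter of $L[U]$ is Rudin--Keisler isomorphic to a finite Fubini power $U^{n}=\underbrace{U\times\cdots\times U}_{n}$ for some $n<\omega$; transporting along a bijection $\kappa^{n}\to\kappa$ we may regard $U^{n}$ as an ultrafilter over $\kappa$, and $W\equiv_{RK}U^{n}$.

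Next I would check that a normal measure is a $p$-point, so that $U^{n}$ is a finite product of $p$-point filters. For this I would apply Proposition~\ref{kanamoriequiv}: given $\alpha<j_{U}(\kappa)=[c_\kappa]_U$, choose $f:\kappa\to\kappa$ with $[f]_{U}=\alpha$ (possible since $\alpha<j_U(\kappa)$), and use that normality gives $[\mathrm{id}]_{U}=\kappa$; then $j_{U}(f)(\kappa)=j_{U}(f)([\mathrm{id}]_{U})=[f]_{U}=\alpha\ge\alpha$, so the $p$-point criterion is met. (This is of course completely standard, so the citation can replace the computation.)

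Finally, since $\kappa^{<\kappa}=\kappa$ holds in $L[U]$ and $U^{n}$ is a product of $p$-point filters, Proposition~\ref{galvinGen} shows that $U^{n}$ is a Galvin filter; as $W\leq_{RK}U^{n}$, Fact~\ref{Factfactor} then yields that $W$ is Galvin. I expect the only genuinely nontrivial ingredient to be the appeal to the structure theory of ultrafilters in $L[U]$ (Kunen's iterated-ultrapower classification together with the uniqueness of the measurable cardinal); the reduction of $\sigma$-completeness to $\kappa$-completeness, the observation that normal measures are $p$-points, and the final Rudin--Keisler descent are all routine given the earlier results in this section.
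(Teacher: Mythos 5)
Your proposal is correct and follows essentially the same route as the paper, which justifies the proposition by noting that every $\kappa$-complete ultrafilter of $L[U]$ is Rudin--Keisler equivalent to a finite power of the normal measure $U$ and then invoking Proposition~\ref{galvinGen} together with Fact~\ref{Factfactor}. Your additional details (GCH in $L[U]$, reduction of $\sigma$- to $\kappa$-completeness via uniqueness of the measurable, and the check that a normal measure is a $p$-point) merely make explicit what the paper leaves implicit.
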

The reason for this is that every $\kappa$-complete ultrafilter in $L[U]$ is Rudin-Keisler equivalent to a finite power of the normal measure $U$, and then theorem \ref{galvinGen} applies to such ultrafilters. Our intention is to use a similar argument to prove the same result for canonical inner models suitable for larger cardinals. It turns out that the results from the previous section can be used to prove that in every canonical inner model up to a superstrong cardinal, every $\kappa$-complete ultrafilter is Galvin. 

In order to prove that, we will use Goldberg's recent work on the \textit{Ultrapower Axiom} ($\mathrm{UA}$) \cite{GoldbergUA} and use his results regarding the structure of ultrafilters under this axiom. 

Let us start this section  by stating Goldberg's results which are going to be used here. First, and most importantly, the fact the $\mathrm{UA}$ follows from Weak Comparison, and therefore should hold in all canonical inner models: 
\begin{theorem}[Theorem 2.3.10. \cite{GoldbergUA}]
Assume that $V=HOD$ and there is a $\Sigma_2$-correct worldly cardinal. If
Weak Comparison holds, then the Ultrapower Axiom holds
\end{theorem}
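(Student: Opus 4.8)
Since this is Goldberg's theorem (used here only as a black box), I will only sketch the strategy I would follow; full details are in \cite{GoldbergUA}. Recall that the Ultrapower Axiom asserts that for any two countably complete ultrafilters $U_0,U_1$ there are \emph{internal} countably complete ultrafilters $W_0\in M_{U_0}$ and $W_1\in M_{U_1}$ whose ultrapowers agree, $\Ult(M_{U_0},W_0)=\Ult(M_{U_1},W_1)$, and for which the two composite embeddings into this common model coincide. So the content is a \emph{comparison} statement: the ultrapower embeddings $j_{U_0}$ and $j_{U_1}$ can be amalgamated into a single target model. The plan is to derive this amalgamation from the Weak Comparison principle of inner model theory --- which asserts, roughly, that suitably ``iterable'' transitive models defined over a common parameter have a common iterate reached by their own (ordinal-definable) iteration strategies --- after using the hypotheses to turn the abstract ultrapowers into concrete structures to which Weak Comparison literally applies.

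First I would work locally inside $V_\delta$, where $\delta$ is the $\Sigma_2$-correct worldly cardinal: $V_\delta\models ZFC$ and $V_\delta\prec_{\Sigma_2}V$, while $V=HOD$ supplies a definable well-ordering, so every element of $V_\delta$ --- in particular $U_0$ and $U_1$ --- is coded by an ordinal $a<\delta$. Then the ultrapowers $M_{U_0}$ and $M_{U_1}$, localized to their own versions of $V_\delta$, are transitive models of $ZFC$ constructed definably over $a$, and, crucially, because $V=HOD$ each one carries internally the ordinal-definable iteration strategy that makes it a legitimate input for Weak Comparison. Countable completeness of the $U_i$ guarantees the ultrapowers are well-founded and that the embeddings $j_{U_i}$ act on the ordinals tamely enough to start the comparison above the relevant structure.

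Next I would apply Weak Comparison to $M_{U_0}$ and $M_{U_1}$ with the parameter $a$: it yields iterations of these two models with a common last model $N$ and iteration embeddings $k_i\colon M_{U_i}\to N$, and $\Sigma_2$-correctness of $\delta$ forces $k_0\circ j_{U_0}=k_1\circ j_{U_1}$, since both composites restrict to the canonical copying of $V_\delta$ into $N$ on the relevant cone of ordinals. The final, and hardest, step is to recognize each $k_i$ as an internal ultrapower of $M_{U_i}$: one must arrange the comparison so that it is \emph{short} on each side --- a single ultrapower rather than a genuine iteration --- and verify that the derived ultrafilter $W_i\in M_{U_i}$, on the least ordinal moved by $k_i$, is countably complete in $M_{U_i}$. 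Controlling the length and the overlap pattern of the Weak Comparison iterations so that they terminate exactly at a one-step ultrapower of each $M_{U_i}$ is where the $\Sigma_2$-correctness of $\delta$ and the $HOD$ hypothesis are genuinely needed, and this is the main obstacle; once it is settled, $W_0$ and $W_1$ witness the Ultrapower Axiom for $U_0$ and $U_1$.
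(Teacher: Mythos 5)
The paper does not prove this statement at all: it is Theorem 2.3.10 of Goldberg's monograph, quoted verbatim and used as a black box, so there is no in-paper argument to compare yours against. Judged on its own terms, your sketch has the right global shape --- localize to $V_\delta$ for the $\Sigma_2$-correct worldly cardinal $\delta$, use $V=HOD$ to code the relevant objects by ordinals and form definable hulls, apply Weak Comparison to (hulls of) the two ultrapowers, and then argue that the resulting comparison maps are internal ultrapower embeddings whose composites with $j_{U_0}$ and $j_{U_1}$ agree --- but it misdescribes the principle being applied and, more importantly, omits the mechanism that makes your ``hardest step'' work.

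Weak Comparison is not the iteration-theoretic comparison of inner model theory: it involves no iteration strategies and no iterations with a common last model. It is a purely first-order amalgamation principle for \emph{finitely generated} $\Sigma_2$-correct structures: given such models $M_0,M_1$ receiving a common finitely generated $\Sigma_2$-correct $P$, it directly supplies elementary embeddings $k_0\colon M_0\to N$ and $k_1\colon M_1\to N$ into a common finitely generated target, commuting over $P$. The reason each $k_i$ is a \emph{single} internal ultrapower is not that one ``arranges the comparison to be short''; it is that any elementary embedding $k\colon M\to N$ whose target satisfies $N=H^N(k[M]\cup a)$ for some finite $a$ is automatically the ultrapower of $M$ by the $M$-ultrafilter derived from $k$ using $a$. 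That finite-generation mechanism --- which is exactly why $V=HOD$ is needed, to provide the definable Skolem functions making the hulls finitely generated and $\Sigma_2$-correct --- is the crucial missing idea in your sketch. Without it, the step of recognizing $k_i$ as a one-step ultrapower has no handle, and the appeal to ordinal-definable iteration strategies (which these structures need not carry and which the hypothesis does not provide) would not get off the ground. The remaining work, showing the derived $W_i$ is countably complete and genuinely a member of $M_{U_i}$, is where the $\Sigma_2$-correctness is used, much as you indicate.
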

The $\mathrm{UA}$ has many consequences which relate to the structure of $\kappa$-complete ($\sigma$-complete) ultrafilters. The one which is most relevant to our work is the factorization of $\sigma$-complete ultrafilters to irreducible ultrafilters.
\begin{definition}
Let $U,W$ be $\sigma$-complete ultrafilters. We define the Rudin-Frol\'{i}k ordering of ultrafilter by $U\leq_{RF}W$ if there is there is a set $I \in U$ and a discrete
sequence\footnote{A sequence of ultrafilters $\l W_i\r_{i<\kappa}$ is discrete if there pairwise disjoint sets $\l A_i\r_{i<\kappa}$ such that $A_i\in W_i$.} of ultrafilters $\l W_i
: i \in I\r$ such that $W \equiv_{RK} lim_U W_i$
.
\end{definition}
Note that since the sequence of $W_i$ is discrete, we may replace in the definition above $\lim_UW_i$ by $\sum_{U,i}W_i$.
There is an equivalent formulation (for $\sigma$-complete ultrafilters) in terms of ultrapowers, $U\leq_{RF}W$ if and only if there is an internal ultrapower embedding $i:M_U\rightarrow M_W$ such that $i\circ j_U=j_W$.
\begin{definition}
 A $\sigma$-complete ultrafilter $W$ is called \textit{irreducible}, if whenever $U\leq_{RF}W$ then either $U\equiv_{RK} W$ or $U$ is principle. Equivalently, $W$ is minimal in the Rudin-Frol\'{i}k order among the non-principal ultrafilters.
\end{definition}
The notion of irreducible ultrafilters was used by Goldberg to describe the $\sigma$-complete ultrafilters under $\mathrm{UA}$:
\begin{theorem}[Theorem 5.3.16\cite{GoldbergUA}]\label{thm: Goldberg} Assume $\mathrm{UA}$.
 Then for every $\sigma$-complete ultrafilter $W$, there is a
finite linear iterated ultrapower 
$\l M_n, U_m, j_{m,n} : m <n \leq l\r$ such that $M_0 = V , M_l = M_W$ , and
$U_m$ is an irreducible ultrafilter of $M_m$ for all $m < l$, and $j_W = j_{0,l}$.
\end{theorem}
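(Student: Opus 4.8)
The plan is to prove the statement by an induction in which one repeatedly ``peels off'' a single irreducible ultrafilter from the front. The tool that makes this possible is the ultrapower reformulation of the Rudin--Frol\'ik order recorded just above the statement: for $\sigma$-complete $U,W$ one has $U\leq_{RF}W$ iff there is an internal ultrapower embedding $i:M_U\to M_W$ with $i\circ j_U=j_W$; and such an $i$, being an ultrapower embedding of $M_U$ that is internal to $M_U$, has the form $i=j^{M_U}_{W_1}$ for an ultrafilter $W_1\in M_U$ which $M_U$ regards as $\sigma$-complete, with $M_W=\Ult(M_U,W_1)$.

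The recursion would run as follows. Given a non-principal $\sigma$-complete $W$, consider the non-principal $\sigma$-complete ultrafilters $U$ with $U\leq_{RF}W$ and choose one, $U_0$, that is $\leq_{RF}$-minimal among them; by definition $U_0$ is irreducible, and $W$ itself is irreducible exactly when $W$ is the only such $U$. (Here one uses that $\leq_{RF}$, restricted below $W$, is well-founded; under $\mathrm{UA}$ this follows from the linearity of the associated comparison order together with the absence of infinite descent, which Goldberg establishes.) Factor $j_W=i\circ j_{U_0}$ with $i=j^{M_{U_0}}_{W_1}$ and $M_W=\Ult(M_{U_0},W_1)$. If $i=\mathrm{id}$ then $W\equiv_{RK}U_0$ is irreducible and we are done with $l=1$. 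Otherwise $M_{U_0}\models\mathrm{UA}$ (this is the one point where one invokes that $\mathrm{UA}$ persists to internal ultrapowers), so the theorem applied inductively inside $M_{U_0}$ to $W_1$ gives a finite linear iteration $\langle M'_n,U'_m,j'_{m,n}:m<n\leq l'\rangle$ with $M'_0=M_{U_0}$, $M'_{l'}=\Ult(M_{U_0},W_1)=M_W$, each $U'_m$ irreducible in $M'_m$, and $j^{M_{U_0}}_{W_1}=j'_{0,l'}$. Prepending $U_0$ --- i.e. setting $M_0=V$, first ultrafilter $U_0$, then $M_{n+1}=M'_n$ and $U_{m+1}=U'_m$ --- produces a linear iterated ultrapower of length $l=l'+1$ terminating at $M_W$, and $j_{0,l}=j'_{0,l'}\circ j_{U_0}=i\circ j_{U_0}=j_W$, as required. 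Checking that the concatenated system really is a linear iterated ultrapower in the required sense, and that ``$U_{m}$ irreducible in $M_m$'' holds for every $m<l$, is routine from the definitions.

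The main obstacle is termination: the induction above must bottom out after finitely many steps, and this cannot be arranged by a naive rank comparison since $W_1$ lives in the other model $M_{U_0}$. The clean reduction is to show that the $\leq_{RF}$-predecessors of a fixed $\sigma$-complete $W$ form a \emph{finite} linear order; then each peeling strictly shortens this order (the chain below $W_1$ inside $M_{U_0}$ corresponds to the part of the chain below $W$ lying strictly above $U_0$), so the recursion halts. Well-foundedness and linearity of this order come from $\mathrm{UA}$, but its \emph{finiteness} is the genuinely hard input: it is extracted from Goldberg's analysis of how the generator $[\mathrm{id}]_W$ of $M_W$ over $j_W[V]$ must sit with respect to the tower of intermediate internal ultrapowers, ruling out infinite $\leq_{RF}$-increasing towers of factors of a single ultrafilter. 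I expect this finiteness to be the only essential difficulty; the remainder of the argument is bookkeeping around the internal-ultrapower characterization of $\leq_{RF}$ and the preservation of $\mathrm{UA}$ to ultrapowers.
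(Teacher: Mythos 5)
The paper itself contains no proof of this statement: it is quoted verbatim as Theorem 5.3.16 of \cite{GoldbergUA} and used as a black box, so there is nothing in the paper to compare your argument against. Measured against Goldberg's actual proof, your skeleton is the right one: peel off a $\leq_{RF}$-minimal non-principal predecessor $U_0$ of $W$ (irreducible by definition), write $j_W=j^{M_{U_0}}_{W_1}\circ j_{U_0}$ via the internal-ultrapower characterization of $\leq_{RF}$, and recurse inside $M_{U_0}$ using that $\mathrm{UA}$ passes to internal ultrapowers. That part of the write-up is correct bookkeeping.

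The genuine gap is exactly the step you flag and then defer: termination. Everything before your last paragraph is formal; the entire mathematical content of the theorem is the claim that there is no infinite strictly $\leq_{RF}$-increasing sequence of factors of a single $W$ --- equivalently, that under $\mathrm{UA}$ the Rudin--Frol\'ik order is locally finite. You assert this is ``extracted from Goldberg's analysis'' of where the generator $[\mathrm{id}]_W$ sits, but you give no argument, and none of the machinery needed to produce one (the Ketonen/seed order, its linearity under $\mathrm{UA}$, the analysis of translations of ultrafilters between internal ultrapowers) appears in your proposal. So the proposal reduces the theorem to its hardest ingredient and stops there. Two further points need attention even granting finiteness: (i) the $\leq_{RF}$-predecessors of $W$ need not form a \emph{linear} order, and linearity is not what you use --- what you need is that every $\leq_{RF}$-chain below $W$ is finite, so you should phrase the induction on the maximal length of such a chain; (ii) the asserted correspondence between $\leq_{RF}$-predecessors of $W_1$ computed in $M_{U_0}$ and $\leq_{RF}$-predecessors of $W$ lying strictly above $U_0$ in $V$ --- which is what makes your induction measure decrease --- is itself a lemma about how $\leq_{RF}$-intervals transfer to internal ultrapowers, not something that follows ``routinely from the definitions.''
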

\begin{corollary}\label{Cor: Gabe decomposition of RF}
 Assume $\mathrm{UA}$, then every $\sigma$-complete ultrafilter $W$ has the form $$W=\sum_{U,\alpha_1}(\sum_{U_{\alpha_1},\alpha_2}\cdots(\sum_{U_{\alpha_1,...,\alpha_{n-1}},\alpha_n}U_{\alpha_1,...,\alpha_{n}})))$$
 where each $U_{\alpha_1,..,\alpha_k}$ is irreducible.
\end{corollary}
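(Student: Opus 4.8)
The plan is to unwind the finite linear iterated ultrapower delivered by Theorem 5.3.16 of \cite{GoldbergUA} into an iterated (Fubini) sum, using repeatedly the Fact that $j_{\sum_U U_\alpha}=j^{M_U}_{U^*}\circ j_U$ with $U^*=[\alpha\mapsto U_\alpha]_U$ --- which holds, with the same proof, for an ultrafilter $U$ over an arbitrary set. Two routine observations will be used throughout. First, iterated sums are associative: if $S=\sum_U V_\alpha$, then $\sum_S W_{\l\alpha,\beta\r}\equiv_{RK}\sum_U\!\big(\sum_{V_\alpha}W_{\alpha,\beta}\big)$; this is immediate by comparing which subsets belong to each side, after the canonical reassociation of the index set. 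Second, a $\sigma$-complete ultrafilter is determined up to Rudin--Keisler isomorphism by the pair $(M_U,j_U)$: if $M_U=M_W$ and $j_U=j_W$, then the identity is an elementary factor embedding between the two ultrapowers in both directions, hence $U\equiv_{RK}W$ (cf.\ \cite{GoldbergUA}); so ``$=$'' in the statement is read as Rudin--Keisler isomorphism.

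Apply Theorem 5.3.16 of \cite{GoldbergUA} to $W$ to obtain a finite linear iterated ultrapower $\l M_n,U_m,j_{m,n}:m<n\le l\r$ with $M_0=V$, $M_l=M_W$, $j_W=j_{0,l}$, and $U_m$ irreducible in $M_m$ for each $m<l$. I would prove by induction on $k$ with $1\le k\le l$ that there is an iterated sum $S_k$ of ultrafilters of $V$, in the nested form of the statement and with its parameters ranging over $k$-tuples, such that $M_{S_k}=M_k$ and $j_{S_k}=j_{0,k}$. For $k=1$ take $S_1=U_0$. For the inductive step, since $M_k=\Ult(V,S_k)$, fix a function $\vec\alpha\mapsto U_{k,\vec\alpha}$ in $V$ with $[\vec\alpha\mapsto U_{k,\vec\alpha}]_{S_k}=U_k$; modifying it on a set not in $S_k$ we may assume each $U_{k,\vec\alpha}$ is an ultrafilter. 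Applying the Fact to $S_k$ and the family $\l U_{k,\vec\alpha}\r$ gives $M_{\sum_{S_k}U_{k,\vec\alpha}}=\Ult(M_{S_k},U_k)=M_{k+1}$ and
\[
j_{\sum_{S_k}U_{k,\vec\alpha}}=j^{M_k}_{U_k}\circ j_{S_k}=j_{k,k+1}\circ j_{0,k}=j_{0,k+1}.
\]
Since $S_k$ is already a nested sum, applying the associativity above repeatedly (once per layer) rewrites $\sum_{S_k}U_{k,\vec\alpha}$, up to Rudin--Keisler isomorphism, as the nested sum with one more layer $\sum_{U_0}\!\big(\sum_{U_{1,\alpha_1}}(\dots(\sum_{U_{k-1,\alpha_1,\dots,\alpha_{k-1}}}U_{k,\alpha_1,\dots,\alpha_k})\dots)\big)$; this is $S_{k+1}$. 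This closes the induction, and at $k=l$ the second observation gives $W\equiv_{RK}S_l$, which has the required form.

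It remains to make every ultrafilter appearing in $S_l$ irreducible. By construction and {\L}o\'s's theorem, the ``$m$-th layer'' of $S_l$ is a function representing $U_m$ over $\Ult(V,S_m)=M_m$, and $U_m$ is irreducible in $M_m$. Reflecting the statement ``$U_m$ is irreducible'' through this ultrapower --- which is legitimate once the quantifier over ultrafilters in the definition of irreducibility is bounded to a set, e.g.\ $P(P(\delta))$ for $\delta$ the underlying cardinal --- shows that $S_m$-almost every $\vec\alpha$ satisfies ``$U_{m,\vec\alpha}$ is irreducible'', so after shrinking the corresponding index sets every factor of $S_l$ is genuinely irreducible. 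I expect the main obstacle to be purely the bookkeeping: keeping straight, as one passes from the tower of ultrapowers to the tower of sums, which model each ultrafilter lives in and which representing function exhibits each layer; a secondary, conceptual subtlety is that irreducibility is \emph{a priori} second-order, so its reflection through {\L}o\'s's theorem relies on the bounded reformulation just noted. With these in hand, the corollary follows directly from Goldberg's theorem and the Facts already established.
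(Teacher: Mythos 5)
Your proposal is correct and follows the derivation the paper clearly intends (the paper states this corollary without proof, relying on Theorem 5.3.16 together with the Fact that $j_{\sum_U U_\alpha}=j^{M_U}_{U^*}\circ j_U$): unwinding the finite linear iteration layer by layer into nested sums, reassociating, and pulling irreducibility back through {\L}o\'s. Your two added care points --- that the factoring Fact must be applied to ultrafilters on arbitrary index sets, and that irreducibility reflects because its quantifier over ultrafilters is bounded --- are both legitimate and correctly handled.
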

The $\mathrm{UA}$ assumption in Theorem \ref{thm: Goldberg} and Corollary \ref{Cor: Gabe decomposition of RF} can be replaced by working in the Mitchell-Steel models\footnote{In this paper, we use the Mitchell-Steel indexing, and allow extenders of superstrong type in the sequence $\mathbb{E}$. These models fall under the setup of \cite{Schlutz1}.} of the form $L[\mathbb{E}]$. This was essentially\footnote{Although in \cite{Schlutz} there is the extra assumption that there are no extenders of superstrong-type, the only significant difference between the argument in \cite[Thm. 4.8]{Schlutz} and the result in \cite[Thm. 9.1]{Schlutz1} without the extra assumption, is that all extenders on the sequence are Dodd-solid.  Moreover, variants of this were already used by Zeman \cite{ZemanIndex}.} proven by Schlutzenberg in \cite{Schlutz}, replacing ``irreducible" with ``on the sequence $\mathbb{E}$", before Goldberg's discovery of $\mathrm{UA}$. The next theorem, also due to Schlutzenberg \cite[Proposition 8.5]{Schlutz1} says that in the Mitchell-Steel models $L[\mathbb{E}]$, these notions of ultrafilters coincide (see also \cite[Theorem 4.3.2]{GoldbergUA}).
\begin{theorem}\label{schluz}
 Suppose $L[\mathbb{E}]$ is an iterable Mitchell-Steel model and $U$ is
a $\sigma$-complete ultrafilter of $L[\mathbb{E}]$. Then the following are equivalent:
\begin{enumerate}
    \item $U$ is irreducible.
    \item $U$ is isomorphic to a Dodd-sound\footnote{An ultrafilter $U$ over $\kappa$ is called Dodd-sound if the function $i:P(\kappa)\rightarrow P^{M_U}([id]_U)$ defined by $i(X)=j_U(X)\cap [id]_U$ is in $M_U$.} ultrafilter.
\item $U$ is isomorphic to an extender on the sequence $\mathbb{E}$.
\end{enumerate}

\end{theorem}
There is a slight ambiguity in Goldberg's book \cite{GoldbergUA} regarding the citation of this theorem. With the gratitude of Goldberg, and thanks to the referee, let us clarify the correct citation for this result: First, $(2)\Rightarrow(1)$ is a simple folklore $\mathrm{ZFC}$ implication (see for example \cite[Prop. 5.3.6]{GoldbergUA}). If $L[\mathbb{E}]$ is the Mitchell-Steel model without extenders of superstrong type (as in the original Mitchell-Steel's \cite{Mitchell2017FineSA}) then $(1)\Rightarrow (3)$ is a direct consequence of \cite[Thm. 4.8]{Schlutz}, then without the superstrong restriction, this implication is a consequence of \cite[Thm. 9.1]{Schlutz1}, both due to Schlutzenberg\footnote{It seems to be open whether $(1)\Rightarrow (3)$ hold for the $\lambda$-indexing version of $L[\mathbb{E}]$.}. As for $(3)\Rightarrow(2)$, the implication for Mitchell-Steel indexing without superstrong type extenders, is due to Steel (see \cite[\S~4]{SchindlerSteel}). The implication for Mitchell-Steel indexing without the superstrong restriction is due to Schlutzenberg \cite{Schlutz1}. For $\lambda$-indexed  $L[\mathbb{E}]$ models, which also have no restriction on superstrong extenders, this was proved earlier by Zeman \cite{ZemanInd}. As mentioned, we are interested in  $L[\mathbb{E}]$ constructed via the Mitchell-Steel indexing without the superstrong type restriction, hence the variation of Theorem \ref{schluz} we need is due to Schlutzenberg \cite{Schlutz1}.

 The next proposition is well-known. It is the last  ingredient in our main result of this section: 
\begin{proposition}\label{Prop: in L[E] irreducible is p-point}
Suppose that $U$ is a Dodd-sound ultrafilter over $\kappa$ which is not a $p$-point. Then $\kappa$ is a limit of superstrong cardinals.   
\end{proposition}
\begin{proof} Let $f$ be the $U$-minimal almost one-to-one function, and $\nu=[f]_U$. The minimality of $f$ guarantees that $\nu$ is a strong generator, namely, for every $g:\kappa\rightarrow\kappa$ and every $\rho<\nu$, $j_{U}(g)(\rho)<\nu$.
    Let $E$ be the $(\kappa,\nu)$-extender obtained derived from $j_U$, and $j_E:V\rightarrow M_U$ be the ultrapower embedding by $E$. Then\footnote{See for example the argument in Claim 2 of \cite[Thm. 8.27]{Steel2010inner}.} $j_E(\kappa)=\nu=crit(k)$ where $k:M_E\rightarrow M_{U}$ is the factor map. 
    %Indeed, $j_E(\kappa)\geq \nu$ and $crit(k)\geq \nu$, where $k:M_E\rightarrow M_U$ is the factor map (as for any extender of length $\nu$). Suppose toward a contradiction that $j_E(\kappa)>\nu$, then pick any $\nu<\gamma<j_E(\kappa)$ and there is $a\in[\nu]^{<\omega}$ and $g:[\kappa]^{|a|}\rightarrow \kappa$ such that $j_E(g)(a)=\nu$ and we can define $g':\kappa\rightarrow \kappa$ by $g'(\alpha)=\sup_{b\in [\alpha]^{|a|}}g(b)$ to see that $j_E(g')(\rho)\geq \nu$ for $\rho=\max(a)+1<\nu$. Now apply $k$ to get $j_U(g')(\rho)\geq k(\nu)\geq \nu$, contradiction the fact that $\nu$ is a generator of $j_U$. Hence $j_E(\kappa)=\nu$.
Since $U$ is Dodd-sound, and $\nu\leq[id]_U$ (by minimality of $f$), $E\in M_U$ and we can form $(j_E)^{M_U}:M_U\rightarrow (M_E)^{M_U}\equiv_{RK} Ult(M_U,E)$. Note that since $M_U$ is closed under $\kappa$-sequences, $(j_E)^{M_U}=j_E\restriction M_U$ and $$(M_E)^{M_U}=\{j_E(f)(a)\mid a\in[\nu]^{<\omega}, \  f:[\kappa]^{|a|}\rightarrow M_U\}.$$
     We claim that $(V_{\nu})^{M_U}\subseteq (M_E)^{M_U}$, which then implies that $M_U\models``\kappa$ is a superstrong cardinal", which then reflects to unboundedly many cardinals below $\kappa$. 
     To see that $(V_{\nu})^{M_U}\subseteq (M_E)^{M_U}$, first note that since $crit(k)=\nu$ and since $\nu=j_E(\kappa)$ is strongly inaccessible in $M_E$, $(V_{\nu})^{M_U}=(V_\nu)^{M_E}$. Hence if $x\in (V_\nu)^{M_U}$ then there is $a\in[\nu]^{<\omega}$ and $f:\kappa^{|a|}\rightarrow V_\kappa\subseteq M_U$ such that $x=j_E(f)(a)\in (M_E)^{M_U}$, as wanted.
     
   %  Consider  $j_E\restriction M_U:M_U\rightarrow j_E(M_U)$, where $j_E(M_U)=\{j_E(f)(a)\mid f:\kappa^{|a|}\rightarrow M_U, \ a\in [\nu]^{<\omega}\}$  note that $E$ is also the $(\kappa,\nu)$-extender derived from $j_E\restriction M_U$.  There is a factor map $k_E: (M_E)^{M_U}\rightarrow j_E(M_U)$ such that $j_E\restriction M_U=k_E\circ (j_E)^{M_U}$ with $crit(k_E)\geq\nu$. In particular, since $\nu$ is strong limit, $(V_\nu)^{Hence $(V_\nu)^{M_U}\subseteq (M_E)^{M_U}$.
\end{proof}
In particular, if there is no measurable limit of superstrong cardinals in the Mitchell-Steel model $L[\mathbb{E}]$, then 
 in $L[\mathbb{E}]$, every irreducible is a $p$-point. Combining this with Schlutzenberg's result (i.e. Corollary \ref{Cor: Gabe decomposition of RF} in $L[\mathbb{E}]$ - see the discussion following Corollary \ref{Cor: Gabe decomposition of RF}) we get: 
\begin{corollary}\label{FormofUlt}
    Suppose that there is no measurable limit of superstrong cardinals in the Mitchell-Steel model $L[\mathbb{E}]$. Then every $\kappa$-complete ultrafilter over $\kappa$ in $L[\mathbb{E}]$ is Rudin-Keisler isomorphic to an ultrafilter of the form $$\sum_{U,\alpha_1}\sum_{U_{\alpha_1},\alpha_2}...\sum_{U_{\alpha_1,...,\alpha_{n-1}},\alpha_n}(U_{\alpha_1,...,\alpha_n})$$ where each $U_{\alpha_1,...,\alpha_k}$ for $0\leq k\leq n$ is a $p$-point. 
\end{corollary}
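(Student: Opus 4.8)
The plan is to read this off from the two facts established just above: the structure theorem for $\sigma$-complete ultrafilters under $\mathrm{UA}$, and the statement (the theorem just proved) that in $L[\mathbb{E}]$ every irreducible ultrafilter is a $p$-point. No new inner-model-theoretic argument is needed; the work lies in applying these results inside $L[\mathbb{E}]$ and in checking that the ``irreducible factors'' produced by the decomposition are genuinely the objects to which the previous theorem speaks.

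First I would verify that $\mathrm{UA}$ holds in $L[\mathbb{E}]$. Being an iterable Mitchell--Steel model, $L[\mathbb{E}]$ satisfies $V=\mathrm{HOD}$ by fine structure, carries $\Sigma_2$-correct worldly cardinals, and satisfies Weak Comparison, since any two of its (sufficiently iterable) initial segments coiterate to a common model; hence Goldberg's Theorem~2.3.10 yields $\mathrm{UA}$ in $L[\mathbb{E}]$. Consequently the Corollary above, stating that under $\mathrm{UA}$ every $\sigma$-complete ultrafilter is a finite iterated sum of irreducible ultrafilters, applies in $L[\mathbb{E}]$: the given $\kappa$-complete ultrafilter $U$ over $\kappa$ is Rudin--Keisler isomorphic to a finite iterated sum of the form displayed in the statement, in which each of the ultrafilters $U_{\alpha_1,\dots,\alpha_k}$ is irreducible.

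It remains to see that the factors can be taken to be $p$-points. If the $U_{\alpha_1,\dots,\alpha_k}$ of the decomposition are irreducible as ultrafilters of $V=L[\mathbb{E}]$, then the previous theorem applies to each of them directly. More carefully, Goldberg's decomposition (Theorem~5.3.16) only presents $U_{\alpha_1,\dots,\alpha_k}$ as an irreducible ultrafilter of the corresponding finite iterate $M_{\alpha_1,\dots,\alpha_{k-1}}$; but this iterate is again a fine-structural model $L[\mathbb{E}']$, and being an inner model of $V=L[\mathbb{E}]$ it has no inner model with a superstrong cardinal (which follows from the hypothesis of the corollary, since an inner model carrying a superstrong cardinal also carries strong cardinals, and any inner model of $M_{\alpha_1,\dots,\alpha_{k-1}}$ is an inner model of $V$). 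Hence the previous theorem applies inside that iterate and $U_{\alpha_1,\dots,\alpha_k}$ is a $p$-point there; applying {\L}o\'s's theorem to the functions representing these factors in the relevant ultrapowers, on a set of indices of measure one each factor is already a $p$-point in $L[\mathbb{E}]$, and we may alter the remaining summands to arbitrary $p$-points without changing the iterated sum, by the Fact that a $U$-sum is unaffected when its summands are changed on a $U$-null set. This gives the required representation of $U$. The main obstacle, modest as it is, is exactly this transfer: one must compute irreducibility and $p$-pointness in the correct iterate and move these first-order properties along the iteration via {\L}o\'s and the null-set stability of $U$-sums, rather than carelessly identifying the factors with ultrafilters of the ground model.
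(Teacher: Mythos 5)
Your proof is correct and is essentially the argument the paper intends: the corollary is stated there without proof, as the immediate combination of Goldberg's decomposition of $\sigma$-complete ultrafilters into iterated sums of irreducibles (via $\mathrm{UA}$ holding in $L[\mathbb{E}]$) with the preceding theorem that irreducible ultrafilters of $L[\mathbb{E}]$ are $p$-points, and your extra care in moving irreducibility/$p$-pointness between the finite iterates and $V$ via {\L}o\'s and the null-set stability of sums fills in exactly what the paper leaves implicit. The only quibble is your parenthetical that a model carrying a superstrong cardinal ``also carries strong cardinals'' (the least superstrong lies below the least strong); what you actually need, and what is true, is that an inner model with a superstrong yields an inner model with a strong --- an awkwardness originating in the paper's own hypothesis, which should read ``superstrong'' to match the theorem it invokes.
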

Note that our result also applies to any model which satisfies the following three assumptions: $\mathrm{UA}$, ``every irreducible is Dodd-sound", and ``there is no measurable limit of superstrong cardinals" 
Finally, we obtain our main result of this section:
\begin{corollary}\label{GalinCan}
Suppose that there is no measurable limit of superstrong cardinals in the Mitchell-Steel model $L[\mathbb{E}]$. Then every $\kappa$-complete ultrafilter over $\kappa$ satisfies $Gal(U,\kappa,\kappa^+)$.
\end{corollary}
\begin{question}
Does $\mathrm{UA}$ imply that every $\kappa$-complete ultrafilter over $\kappa$ satisfies that Galvin property?
\end{question}
We conjecture that the answer is no. However, it is still interesting to ask how far in the hierarchy of inner models we have that every $\kappa$-complete ultrafilter over $\kappa$ satisfies the Galvin property.
\begin{question}
Is there a Galvin ultrafilter which is not Rudin-Keisler equivalent to an ultrafilter of the form appearing in corollary \ref{FormofUlt}?
\end{question}
In terms of ultrapowers, an ultrafilter $W$ which is not Rudin-Keisler equivalent to an ultrafilter of the form appearing in corollary \ref{FormofUlt}, should have infinitely many generators with respect to $j_W$. Alternatively, one can use Kanamori's terminology of skies and constellations from \cite{Kanamori1976UltrafiltersOA} and require that $W$ has infinitely many skies.
\section{Supercompact cardinals}
So far we have established that in the canonical inner models up to a superstrong, every $\kappa$-complete ultrafilter over $\kappa$ has the Galvin property. This shows that even for larger cardinals than just a measurable, there is no $ZFC$ construction of a $\kappa$-complete ultrafilter over $\kappa$ which is not Galvin. However, if we turn to the realm of large cardinals where we currently have no canonical inner models, this situation changes. Below we prove that there is a $ZFC$ construction for a non-Galvin ultrafilter over $\kappa$ assuming that $\kappa$ is a supercompact cardinal. This answers a question from \cite[Question 4.5]{BenGarShe} and the parallel one in \cite[Question 2.25]{bgp}. To prove it, we will use the notion of a $\kappa$-independent family:  Recall that a sequence of subsets of $\kappa$, $\l A_i\mid i<\lambda\r$ is called \textit{$\kappa$-independent} if for every $I,J\in[\lambda]^{<\kappa}$, if $I\cap J=\emptyset$ then $(\bigcap_{i\in I}A_i)\cap (\bigcap_{j\in I}A_j^c)\neq\emptyset$. One can show that if $\kappa^{<\kappa}=\kappa$, there is a $\kappa$-independent family of size $2^\kappa$
(see \cite[Exercise 8.10]{KunenBook}).
\begin{theorem}\label{SupercompactGalvin}
 Suppose that $j:V\rightarrow M$ is an elementary embedding with $crit(j)=\kappa$, $M^{\kappa}\subseteq M$, $j_U:V\rightarrow M_U$ is the derived normal ultrapower and $k:M_U\rightarrow M$ is the factor map. Suppose that $j[\kappa^+],k[j_U(\kappa^+)]\in M$ and that $j_U(\kappa^+)<j(\kappa)$. Then there is a non-Galvin $\kappa$-complete ultrafilter over $\kappa$.
\end{theorem}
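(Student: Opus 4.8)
The proof combines the three tools announced in the introduction. From Hayut's construction \cite{YairIndep} one takes a ``special'' $\kappa$-independent family $\langle A_\alpha \mid \alpha<\kappa^+\rangle$ of subsets of $\kappa$, whose relevant extra features are that all finite Boolean combinations are large (unbounded, and stationary for the last assertion) and that the family behaves, for the combinatorics below, like a family of mutually generic Cohen subsets of $\kappa$. From such a family the recipe of \cite{Non-GalvinFil} produces a non-Galvin $\kappa$-complete filter, but we want an ultrafilter; so the task is to build a $\kappa$-complete ultrafilter $W$ on $\kappa$ for which $\langle A_\alpha \mid \alpha<\kappa^+\rangle$ witnesses the failure of Galvin's property, i.e.\ $A_\alpha\in W$ for all $\alpha<\kappa^+$ while $\bigcap_{\alpha\in I}A_\alpha\notin W$ for every $I\in[\kappa^+]^\kappa$.

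I would realize $W$ as the ultrafilter derived from the supercompact embedding $j$ together with a carefully chosen seed. Put $\langle B_\xi \mid \xi<j(\kappa^+)\rangle=j(\langle A_\alpha \mid \alpha<\kappa^+\rangle)$. Because $j[\kappa^+]\in M$ and $M^\kappa\subseteq M$, the set $\bigcap_{\alpha<\kappa^+}j(A_\alpha)=\bigcap_{\xi\in j[\kappa^+]}B_\xi$ is an element of $M$; by elementarity $\langle B_\xi\rangle$ is a $\kappa$-independent family in $M$, and the displayed set is an intersection of only $\kappa^+<j(\kappa)$ of its members, hence unbounded in $j(\kappa)$. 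Choosing a seed $s\ge\kappa$ inside it and setting $W=\{X\subseteq\kappa \mid s\in j(X)\}$ gives a uniform $\kappa$-complete ultrafilter with every $A_\alpha\in W$. The complementary requirement is where $i_U$ and the hypothesis $k[i_U(\kappa)^+]\in M$ are used: for $I\in[\kappa^+]^\kappa$ one has $j\!\left(\bigcap_{\alpha\in I}A_\alpha\right)=\bigcap_{\xi\in j(I)}B_\xi$ with $j(I)\supsetneq j[I]$, and since $I$, $j[I]$, hence $j(I)\setminus j[I]$ all lie in $M$ by $\kappa$-closure, one must arrange $s\notin B_\xi$ for some $\xi\in j(I)\setminus j[I]$ --- and there are up to $2^\kappa$ such demands. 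Here one exploits that the ``fat'' intersections $\bigcap_{\xi\in j(I)\setminus j[I]}B_\xi$, being intersections of $j(\kappa)$-many members and so not controlled by independence, omit $s$ precisely because the family behaves generically, while the ``thin'' intersection over $j[\kappa^+]$ still catches $s$; reconciling the count $2^\kappa$ of these demands with the closure available for the construction is done by carrying the bookkeeping through $M_U$, where the needed closure sits at $i_U(\kappa)$, and transferring it back along $k$ using $k[i_U(\kappa)^+]\in M$, in the spirit of the lifting technique of \cite{OnPrikryandCohen}. The remaining verification is then routine: each $A_\alpha\in W$ because $s\in\bigcap_{\alpha<\kappa^+}j(A_\alpha)$, and for each $I$ the chosen index in $j(I)\setminus j[I]$ detects $s\notin B_\xi$.

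The main obstacle is exactly this simultaneous fit: $s$ must lie in every $B_{j(\alpha)}$ ($\alpha<\kappa^+$), indexed by the ``coherent'' set $j[\kappa^+]$, yet escape every intersection indexed by a ``new'' set $j(I)\setminus j[I]$ of size $j(\kappa)$; that this is achievable rests on the gap between $\kappa^+$ and $j(\kappa)$ together with the genericity of Hayut's family, and doing it for all $2^\kappa$ of the sets $I$ at once is what forces one to route through $M_U$, $k$ and to invoke both $j[\kappa^+]\in M$ and $k[i_U(\kappa)^+]\in M$ --- without these neither the image family nor the images of the $I$'s are available inside the relevant inner model. For the ``in particular'', a $2^\kappa$-supercompact $\kappa$ supplies an embedding $j$ with $M^{2^\kappa}\subseteq M$ (hence $j[\kappa^+],k[i_U(\kappa)^+]\in M$ and $j(\kappa)>2^\kappa$, which also removes the bookkeeping subtlety), and starting Hayut's construction from a family all of whose finite Boolean combinations are stationary makes the witnessing sets --- hence $W$ --- contain every club, so $Cub_\kappa\subseteq W$.
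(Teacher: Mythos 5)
Your proposal has the right skeleton --- derive $W=\{X\subseteq\kappa\mid s\in j(X)\}$ from a seed $s$ chosen, via independence of the image family, inside $\bigcap_{\xi\in j[\kappa^+]}B_\xi$ but outside enough other $B_\xi$'s --- and you correctly isolate the crux: a single seed must simultaneously escape $j\bigl(\bigcap_{\alpha\in I}A_\alpha\bigr)$ for all $2^\kappa$ many $I\in[\kappa^+]^\kappa$, while independence only controls fewer than $j(\kappa)$ coordinates at a time. But at exactly that point the proposal stops being a proof: ``the fat intersections omit $s$ because the family behaves generically'' and ``the bookkeeping is carried through $M_U$ and transferred back along $k$'' are not arguments. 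Independence says nothing about intersections over index sets of size $j(\kappa)$ such as $j(I)\setminus j[I]$, no genericity of the family is available or needed (the paper uses an arbitrary $\kappa$-independent family here; Hayut's normal families and the Cohen/lifting machinery enter only for the club-filter refinements), and no inductive bookkeeping over the $2^\kappa$ demands is performed anywhere.

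The missing idea is that all $2^\kappa$ demands collapse to a \emph{single} application of independence, because there is one fixed set $T:=k[i_U(\kappa)^+]\setminus j[\kappa^+]\in M$, of size less than $j(\kappa)$ in $M$ and disjoint from $j[\kappa^+]$, which meets $j(I)$ for \emph{every} $I\in[\kappa^+]^\kappa$. Indeed, if $e:\mathrm{otp}(I)\to I$ is the increasing enumeration, then $r:=i_U(e)(\kappa)=[e]_U$ is the $\kappa$-th element of $i_U(I)$; it lies in $i_U(\kappa^+)\subseteq i_U(\kappa)^+$ but not in $i_U[\kappa^+]$ (an injective function is not constant mod $U$), so $k(r)\in j(I)\cap T$. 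One therefore picks the seed $\delta$ once and for all in $\bigl(\bigcap_{r\in j[\kappa^+]}A'_r\bigr)\cap\bigl(\bigcap_{s\in T}(A'_s)^c\bigr)$, which is nonempty since both index sets have size below $j(\kappa)$ in $M$; then for each $I$ the element $k(r)\in j(I)\cap T$ witnesses $\delta\notin j\bigl(\bigcap_{\alpha\in I}A_\alpha\bigr)$. This is precisely where both hypotheses $j[\kappa^+]\in M$ and $k[i_U(\kappa)^+]\in M$ are consumed, and it is the step your write-up gestures at but does not supply. Without it the construction as you describe it cannot be completed.
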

\begin{proof}
Let $\l A_i\mid i<\kappa^+\r$ be a $\kappa$-independent family. 
Denote by $\l A'_\alpha\mid \alpha<j(\kappa^+)\r=j(\l A_i\mid i<\kappa^+\r)$. Since $j[\kappa^+],k[j_U(\kappa^+)]\in M$, in $M$ we have the sequence
$\l A'_{r}\mid r\in j[\kappa^+]\r$ and $\l A'_s\mid s\in k[j_U(\kappa^+)]\setminus j[\kappa^+]\r$. Note that the transitive collapse of $j_U[\kappa^+]$ is $\kappa^+$ and it belongs to $M$ and therefore $M\models |j[\kappa^+]|=\kappa^+<j(\kappa)$. Also, the transitive collapse of $k[j_U(\kappa^+)]$ is $j_U(\kappa^+)$, and by the assumptions of the 
theorem, $j_U(\kappa^+)<j(\kappa)$. It follows that  $M\models |k[j_U(\kappa^+)]|<j(\kappa)$. Next, the $j(\kappa)$-independence of the family $\l A'_\alpha\mid \alpha<j(\kappa^+)\r$ implies that there is $$\kappa\leq\delta\in (\bigcap_{r\in j[\kappa^+]}A'_r)\cap( \bigcap_{s\in k[j_U(\kappa^+)]\setminus j[\kappa^+]}(A'_s)^c).$$ Let $W=\{X\subseteq\kappa\mid \delta\in j(X)\}$. Then $\{A_i\mid i<\kappa^+\}\subseteq W$ and for every $I\in[\kappa^+]^\kappa$, consider $j_U(I)$. There is $r\in j_U(I)\setminus j_U[\kappa^+]$ e.g the $\kappa$-th elements in $j_U(I)$ in the increasing enumeration. Then $k(r)\in j(I)$ and not in $j[\kappa^+
]$, thus $\delta\notin A'_{k(r)}$. It follows that $\delta\notin \bigcap_{s\in j(I)}A'_s=j(\bigcap_{i\in I}A_i)$. This implies that $\bigcap_{i\in I}A_i\notin W$.
\end{proof}
\begin{remark}
In the previous theorem, we can replace the assumption $j_U(\kappa^+)<j(\kappa)$ with the assumption $j_U(\kappa)<j(\kappa)$. Indeed, for each $\alpha<j_U(\kappa^+)$, there is a well order $R_\alpha\in M_U$ of $j_U(\kappa)$ such that $\text{otp}(j_U(\kappa),R_\alpha)=\alpha$. Since $ \l k[j_U(\kappa)],k(R_\alpha)\restriction k[j_U(\kappa)]\r$ is order isomorphic to  $\l j_U(\kappa),R_\alpha\r$, we deduce that in $M$ there is a well ordering of $k[j_U(\kappa)]$ (and in turn of $j_U(\kappa)$) of order-type $\alpha$. Hence $\alpha<(j_U(\kappa)^+)^M$ and $j_U(\kappa^+)\leq (j_U(\kappa)^+)^M$. Since $j_U(\kappa)<j(\kappa)$ and $j(\kappa)$ is inaccessible in $M$, $j_U(\kappa^+)<j(\kappa)$.

Let us point out that we do not know it the assumption $j_U(\kappa)<j(\kappa)$ follows from the other assumptions. 
\end{remark}
Note that every $2^\kappa$-supercompact embedding satisfies the assumptions of Theorem 3.1. Hence, we conclude the following:
\begin{corollary}
 If $\kappa$ is $2^\kappa$-supercompact then there is  a non-Galvin ultrafilter over $\kappa$.
\end{corollary}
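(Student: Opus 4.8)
The plan is to obtain the corollary as an immediate application of Theorem \ref{SupercompactGalvin}: all that is needed is to produce, from the $2^\kappa$-supercompactness of $\kappa$, an embedding meeting the hypotheses of that theorem. So I would set $\lambda = 2^\kappa$ and fix an elementary embedding $j : V \to M$ with $\mathrm{crit}(j) = \kappa$ and $M^\lambda \subseteq M$ — for instance, the ultrapower by a normal fine ultrafilter on $P_\kappa(\lambda)$ — for which the standard arguments give $j(\kappa) > \lambda \geq \kappa^+$. This already supplies the ambient requirement $M^\kappa \subseteq M$ of Theorem \ref{SupercompactGalvin}. Next I would let $U = \{X \subseteq \kappa : \kappa \in j(X)\}$ be the normal measure derived from $j$, with ultrapower $i_U : V \to M_U$ and factor map $k : M_U \to M$, $k([f]_U) = j(f)(\kappa)$; by the theorem it then remains only to check the two inclusions $j[\kappa^+] \in M$ and $k[i_U(\kappa)^+] \in M$.

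Both inclusions follow from the closure of $M$ under $\le\lambda$-sequences, once the relevant sets are seen to have size at most $\lambda$. For the first, $j[\kappa^+]$ is a subset of $M$ of size $\kappa^+ \leq \lambda$, hence (enumerating it in $V$ as a short sequence of elements of $M$ and invoking $M^\lambda \subseteq M$) it belongs to $M$. For the second, the point is to read $i_U(\kappa)^+$ inside $M_U$: by elementarity $(i_U(\kappa)^+)^{M_U} = i_U(\kappa^+)$, and every ordinal below $i_U(\kappa^+)$ in $M_U$ is represented by a function $\kappa \to \kappa^+$, of which there are only $(\kappa^+)^\kappa = 2^\kappa = \lambda$; thus $|i_U(\kappa)^+| \leq \lambda$, its pointwise $k$-image is a $V$-set of $\le\lambda$-many ordinals of $M$, and $M^\lambda \subseteq M$ again puts it into $M$. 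With both hypotheses verified, Theorem \ref{SupercompactGalvin} delivers a non-Galvin $\kappa$-complete ultrafilter.

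The only genuinely load-bearing point is the bound $|(i_U(\kappa)^+)^{M_U}| \leq 2^\kappa$ — i.e.\ that $i_U(\kappa)^+$ is to be computed in $M_U$ rather than in $V$ — since it is exactly this that keeps the hypothesis of Theorem \ref{SupercompactGalvin} within reach of $2^\kappa$-supercompactness (computing the successor in $V$ could push the relevant set up to size $(2^\kappa)^+$, which would demand more closure of $M$). Everything else is routine bookkeeping about reflecting small sets into a highly closed $M$.
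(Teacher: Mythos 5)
Your proposal is correct and follows the same route the paper intends: derive the hypotheses of Theorem \ref{SupercompactGalvin} from the closure $M^{2^\kappa}\subseteq M$, noting that $j[\kappa^+]$ has size $\kappa^+\leq 2^\kappa$ and that $(i_U(\kappa)^+)^{M_U}=i_U(\kappa^+)$ has size at most $(\kappa^+)^\kappa=2^\kappa$ since its elements are represented by functions $\kappa\to\kappa^+$. Your closing observation that the successor must be computed in $M_U$ is exactly the point that keeps the hypothesis within reach of $2^\kappa$-supercompactness, and it matches how the theorem's proof actually uses the set $k[i_U(\kappa)^+]$.
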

\begin{proposition}
 The first cardinal such that there is a non-Galvin ultrafilter is below the first supercompact.
\end{proposition}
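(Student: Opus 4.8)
The plan is to argue by reflection. The assertion ``there is a non-Galvin $\kappa$-complete ultrafilter over $\kappa$'' is witnessed by objects of size at most $2^\kappa$ --- an ultrafilter $W\subseteq P(\kappa)$ together with a sequence $\langle A_i\mid i<\kappa^+\rangle$ of members of $W$ witnessing the failure of $Gal(W,\kappa,\kappa^+)$ --- so a supercompactness embedding $j\colon V\to M$ with $\mathrm{crit}(j)=\kappa$ and sufficient closure carries such a witness at $\kappa$ into $M$; since $\kappa<j(\kappa)$, this reflects the assertion below $\kappa$, which shows that the least cardinal carrying a non-Galvin ultrafilter lies strictly below the least supercompact.

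Concretely, assume a supercompact cardinal exists and let $\kappa$ be the least one. As $\kappa$ is in particular $2^\kappa$-supercompact, the Corollary above (equivalently, Theorem~\ref{SupercompactGalvin} applied to a $2^\kappa$-supercompactness embedding, whose hypotheses $j[\kappa^+],k[i_U(\kappa)^+]\in M$ are automatic from the closure ${}^{2^\kappa}M\subseteq M$) produces a non-Galvin $\kappa$-complete ultrafilter $W$ over $\kappa$ in $V$; in particular the least cardinal $\kappa_0$ carrying a non-Galvin ultrafilter exists and $\kappa_0\le\kappa$. Now fix $j\colon V\to M$ elementary with $\mathrm{crit}(j)=\kappa$ and ${}^{2^\kappa}M\subseteq M$, so $\kappa<j(\kappa)$. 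Using ${}^{2^\kappa}M\subseteq M$ and $\kappa^+\le 2^\kappa$ one records the standard absoluteness facts: $P(\kappa)^M=P(\kappa)$, $(\kappa^+)^M=\kappa^+$, every subset of $\kappa^+\times\kappa$ --- in particular $W$ itself and every $\kappa^+$-indexed sequence of members of $W$ --- belongs to $M$, and $([\kappa^+]^\kappa)^M=[\kappa^+]^\kappa$. Consequently $M$ evaluates ``$W$ is a non-Galvin $\kappa$-complete ultrafilter over $\kappa$'' exactly as $V$ does: the bad sequence $\langle A_i\mid i<\kappa^+\rangle$ lies in $M$, and for each $I\in[\kappa^+]^\kappa$ the set $\bigcap_{i\in I}A_i$ and the question of its membership in $W$ are computed identically in $M$ and in $V$. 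Hence $M\models$ ``there is a cardinal $\mu<j(\kappa)$ carrying a non-Galvin $\mu$-complete ultrafilter over $\mu$'', witnessed by $\mu=\kappa$; by elementarity of $j$, $V\models$ ``there is a cardinal $\mu<\kappa$ carrying a non-Galvin $\mu$-complete ultrafilter over $\mu$'', so $\kappa_0<\kappa$, which is the claim.

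The only delicate point --- the place where the argument could break if handled carelessly --- is the absoluteness bookkeeping between $V$ and $M$: the Galvin property quantifies over all $\kappa^+$-sequences of subsets of $\kappa$ and over all $\kappa$-sized subsets of $\kappa^+$, so one must be sure that $M$ omits none of these and does not collapse $\kappa^+$. This is exactly why one takes an embedding with the stronger closure ${}^{2^\kappa}M\subseteq M$ rather than the bare ${}^{\kappa}M\subseteq M$ of a normal measure; it costs nothing, since $\kappa$ is fully supercompact. Everything else is routine: elementarity does the reflecting, and the Corollary above supplies the non-Galvin ultrafilter to be reflected.
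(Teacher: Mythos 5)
Your proof is correct and follows essentially the same route as the paper: obtain a non-Galvin $\kappa$-complete ultrafilter at the least supercompact $\kappa$ via the preceding corollary, observe that the $2^\kappa$-closure of $M$ puts the ultrafilter and its witnessing $\kappa^+$-sequence into $M$ with the relevant notions computed absolutely, and reflect via elementarity to get the statement below $\kappa$. The only difference is that you spell out the absoluteness bookkeeping that the paper leaves implicit in the phrase ``by reflection.''
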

\begin{proof}
 Let $\kappa$ be $2^\kappa$-supercompact, then by the previous corollary, $\kappa$ carries a $\kappa$-complete ultrafilter $U$ which is not Galvin. Let $j:V\rightarrow M$ be an embedding witnessing $2^\kappa$-supercompactness, then $U\in M$ and there is a witness $\l A_i\mid i<\kappa^+\r$ for $\neg Gal(U,\kappa,\kappa^+)$ which also belongs to $M$. Therefore, $$M\models ``\kappa\text{  carries a non-Galvin }\kappa\text{-complete ultrafilter}"$$
 and by reflection, this should hold at many cardinals below $\kappa$.
\end{proof}
\section{Non-Galvin ultrafilters Extending the club filter}
Note that the ultrafilter we constructed in the previous section does not necessarily extend the club filter. However, this was not the situation in the latest constructions of non-Galvin ultrafilters \cite{BenGarShe,OnPrikryandCohen,Non-GalvinFil}. In order to achieve this, let us choose our independent family a bit differently. The idea is to use Y. Hayut's construction of independent families with special properties from \cite{YairIndep}. Hayut considered these families for the purpose of proving the equivalence between the normal filter extension property and the filter extension property.
\begin{definition}
 A sequence $\l A_i\mid i<\lambda\r$ is called a \textit{normal} $\kappa$-independent family, if it is $\kappa$-independent and for every $\l A_{\alpha_i}\mid i<\kappa\r,\l A_{\beta_i}\mid i<\kappa\r\subseteq \l A_i\mid i<\lambda\r$ are any two disjoint collections, then $\Delta_{i<\kappa}A_{\alpha_i}\setminus A_{\beta_i}$ is a stationary subset of $\kappa$. 
\end{definition}
The following proposition is due to Hayut\footnote{ Hayut uses the more general framework of stationary sets over $P_\kappa(\lambda)$ due to Jech \cite{Jech2010stationary}.}:
\begin{proposition}\label{normalfamily}
 Let $\kappa$ be a regular cardinal. If $\diamondsuit(\kappa)$ holds then there is a normal $\kappa$-independent family of length $2^\kappa$.
\end{proposition}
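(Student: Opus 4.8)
The plan is to define the family by an explicit (non-recursive) formula read off a single $\diamond(\kappa)$-sequence, arranged so that a correct guess \emph{forces} membership for one of the two collections in each normality instance while non-membership for the other collection is automatic on a club. First I would replace $\diamond(\kappa)$ by its standard equivalent guessing $\kappa$-indexed sequences of subsets of $\kappa$: identifying $\kappa\times\kappa$ with $\kappa$ via a continuous bijection and coding $\bar Z=\l Z_i:i<\kappa\r$ by $\{(i,\gamma):\gamma\in Z_i\}$, a $\diamond(\kappa)$-sequence yields, after decoding, a sequence $\l\bar B^\xi:\xi<\kappa\r$ with $\bar B^\xi=\l B^\xi_i:i<\xi\r$ a $\xi$-sequence of subsets of $\xi$ such that for every $\bar Z$ the set $\{\xi:\forall i<\xi\ B^\xi_i=Z_i\cap\xi\}$ is stationary. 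Fix such a sequence, fix the club $C_0$ on which the decoding behaves correctly, fix an injective enumeration $\l X_\eta:\eta<2^\kappa\r$ of all nonempty subsets of $\kappa$, and set
$$A_\eta=\{\xi<\kappa:\exists\, i<\xi\ (X_\eta\cap\xi=B^\xi_i)\}.$$

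To verify $\kappa$-independence, given disjoint $I,J\in[2^\kappa]^{<\kappa}$ with $I=\{\eta_k:k<\mu\}$, I would feed $\diamond$ the sequence whose $k$-th entry is $X_{\eta_k}$ for $k<\mu$ and which is $\emptyset$ afterwards. A correct guess $\xi\in C_0$ which is also above a suitable threshold — so that for each $\eta\in J$ the trace $X_\eta\cap\xi$ is nonempty and differs from every $X_{\eta_k}\cap\xi$, which holds for all large $\xi$ since $\mu$ and $J$ are of size $<\kappa$ and $\kappa$ is regular — then lands in $A_{\eta_k}$ for every $k<\mu$ (witnessed by $i=k$) and outside $A_\eta$ for every $\eta\in J$ (since each $B^\xi_i$ is one of the $X_{\eta_k}\cap\xi$ or $\emptyset$). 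In particular the $A_\eta$ are pairwise distinct and distinct from $\emptyset$ and $\kappa$, so this really is a $\kappa$-independent family of length $2^\kappa$.

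For normality, let $\l A_{\alpha_i}:i<\kappa\r$ and $\l A_{\beta_i}:i<\kappa\r$ be disjoint subcollections. Since equal $X$'s give equal $A$'s, disjointness forces $X_{\alpha_i}\neq X_{\beta_j}$ for all $i,j$, so $\delta(i,j):=\min\big((X_{\alpha_i}\setminus X_{\beta_j})\cup(X_{\beta_j}\setminus X_{\alpha_i})\big)<\kappa$ is well-defined and $C_1:=\{\xi:\forall i,j<\xi\ \delta(i,j)<\xi\}$ is a club. Now take $\xi$ in $C_0\cap C_1$ and in the stationary set of correct guesses for $\l X_{\alpha_i}:i<\kappa\r$ (a stationary set). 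For every $i<\xi$: $X_{\alpha_i}\cap\xi=B^\xi_i$, so $\xi\in A_{\alpha_i}$; and $\xi\notin A_{\beta_i}$, since otherwise $X_{\beta_i}\cap\xi$ would equal $B^\xi_k=X_{\alpha_k}\cap\xi$ for some $k<\xi$, contradicting $\delta(i,k)<\xi$ (as $\xi\in C_1$). Hence $\xi\in\Delta_{i<\kappa}(A_{\alpha_i}\setminus A_{\beta_i})$, so this diagonal intersection contains a stationary set, as required.

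The point I expect to be delicate, and where a naive setup breaks, is the asymmetric treatment of the two collections. A single correct guess of the sequence of traces $\l X_{\alpha_i}\cap\xi:i<\xi\r$ is used only to \emph{force} $\xi$ into the $A_{\alpha_i}$'s; one does not try to make $\diamond$ guess the second collection and forbid its traces, because at a fixed coordinate $\xi$ two indices with the same $\xi$-trace cannot be separated, and such coincidences are unavoidable at small stages. Instead one lets the separation of distinct subsets of $\kappa$ happen later, on the club $C_1$ — this is precisely what makes $\xi\notin A_{\beta_i}$ hold for free at the relevant $\xi$'s. The remaining ingredients, namely the coding/decoding lemma for $\diamond(\kappa)$ and the routine checks that $C_0$, $C_1$ and their likes are club, carry no real difficulty.
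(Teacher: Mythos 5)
Your proof is correct and follows essentially the same strategy as the paper's: read the family off a $\diamond(\kappa)$-sequence by declaring $\xi\in A_X$ iff the trace $X\cap\xi$ appears among the sets guessed at stage $\xi$, get membership from stationarily many correct guesses, and get non-membership from a club of stages past which the relevant subsets of $\kappa$ have already been separated. The only difference is cosmetic: the paper guesses \emph{pairs} of sequences and builds a rejection clause ($\forall i,j<\alpha,\ Y^{(\alpha)}_i\neq Z^{(\alpha)}_j$) into the definition of $R_Y$, whereas you guess a single sequence and let the club $C_1$ of separation points do that work — the verifications are otherwise the same.
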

\begin{proof}
Let $\l X_i\mid i<\kappa\r$ be $\diamondsuit(\kappa)$-sequence. Let us start by multiplying it, by fixing a bijection $\phi:\kappa\times\kappa\times\{0,1\}\leftrightarrow \kappa$. We fix a club $C^*$ of all $\alpha<\kappa$ such that $\phi\restriction \alpha\times\alpha\times\{0,1\}:\alpha\times\alpha\times\{0,1\}\leftrightarrow \alpha$. Consider $\phi^{-1}[X_\alpha]$, for $\alpha\in C^*$, we can identify this with a sequence: $\l \l Y_i^{(\alpha)},Z_i^{(\alpha)}\r\mid i<\alpha\r$.

Take any sequence $\mathcal{A}=\l \l A_i,B_i\r\mid i<\kappa\r$. Then $\mathcal{A}\subseteq \kappa\times \kappa\times\{0,1\}$, and we translate $\phi[\mathcal{A}]=B$.  Then the set $\{\alpha\in C^*\mid B\cap \alpha= X_\alpha\}$ is stationary and for each such $\alpha$, $$B\cap \alpha=\phi[\mathcal{A}\cap(\alpha\times\alpha\times\{0,1\})]=\phi[\l\l A_i\cap\alpha,B_i\cap\alpha\mid i<\alpha\r]$$ 
Hence for each $i<\alpha$, $Y_i^{(\alpha)}=A_i\cap \alpha$ and $Z_i^{(\alpha)}=B_i\cap \alpha$. 

Now suppose that $Y\subseteq \kappa$, define
$$R_Y:=\{\alpha\in C^*\mid \forall i,j<\alpha.Y^{(\alpha)}_i\neq Z^{(\alpha)}_j\text{ and }\exists i<\alpha.Y\cap\alpha=Y^{(\alpha)}_i\}.$$
\begin{claim}
$\l R_Y\mid Y\subseteq\kappa\r$ is a normal $\kappa$-independent family of size $2^\kappa$.
\end{claim}
\begin{proof}
First, if $Y\neq Y'$  let $\nu\in Y\Delta Y'$. Consider the sequence $$\mathcal{Y}=\l \l Y,Y'\r\mid i<\kappa\r.$$ Find $\alpha>\nu$ which guesses $\l \l Y\cap \alpha,Y'\cap\alpha\r\mid i<\alpha\r$. In particular, for each $i,j<\alpha$, $Y^{(\alpha)}_i=Y\cap\alpha\neq Y'\cap \alpha= Z^{(\alpha)}_j$ and therefore $\alpha\in R_Y\Delta R_{Y'}$.

Let $\l R_{Y_i}\mid i<\lambda\r,\l R_{Z_j}\mid j<\lambda'\r$ where $\lambda,\lambda'<\kappa$. For each $i<\lambda,j<\lambda'$ find $\alpha_{i,j}<\kappa$ such that $Y_i\cap \alpha_{i,j}\neq Z_j\cap\alpha_{i,j}$ and let $\alpha^*=\sup_{i,j}\alpha_{i,j}<\kappa$. Consider the sequence $\l Y_i,Z_i\mid i<\kappa\r$ such that if $i$ is not define take $Y_i=Y_0$ or $Z_i=Z_0$. Find $\alpha$ which guesses the sequence $\l\l Y_i\cap\alpha,Z_i\cap\alpha\r\mid i<\alpha\r$ such that $\alpha>\lambda,\lambda',\alpha^*$. Then such $\alpha$ belongs to $\bigcap_{i<\lambda}R_{Y_i}\cap\bigcap_{j<\lambda'}(R_{Z_j})^c$.

Finally, for normality, suppose that $\l Y_i\mid i<\kappa\r$ and $\l Z_i\mid i<\kappa\r$ are two disjoint sequences. Then the set $S$ of all $\alpha\in C^*$ such that the sequence $\l \l Y_i\cap\alpha,Z_i\cap\alpha\r \mid i<\alpha\r$ is guessed is stationary. Also we let $C_0=\{\alpha<\kappa\mid \forall i,j<\alpha, Y_i\cap \alpha\neq Z_j\cap\alpha\}$. Note that $C_0$ is a club at $\kappa$. If $\alpha\in S\cap C_0$, then for every $i<\alpha$, $Y^{(\alpha)}_i=Y_i\cap\alpha\neq Z_i\cap \alpha$ and therefore $\alpha\in R_{Y_i}\setminus R_{Z_i}$. Hence $S\cap C_0\subseteq \Delta_{i<\alpha}R_{Y_i}\setminus R_{Z_i}$. 
\end{proof}
\end{proof}
Note that in a normal $\kappa$-independent family, the intersection of any less than $\kappa$ many of the sets and less than $\kappa$ many of the complement is stationary.
%\begin{proof}
%    In the previous construction, this was clear since by the $\diamond$ we can find stationarily many $\alpha$ in each of the intersections. However, this is true also from the abstract definition, let $I,J\in[2^\kappa]^{<\kappa}$ by such that $I,J$ are disjoint, we want to prove that $(\bigcap_{i\in I}A_i)\cap(\bigcap_{j\in J}A^c_j)$ is stationary. Indeed, enumerate $I=\l i_\alpha\mid \alpha<\lambda\r$ and $J=\l j_\beta\mid \beta<\lambda'\r$ and define $i_\alpha=i_0, \ j_\beta=j_0$ for $\alpha\geq\lambda$ and $\beta\geq\lambda'$. By the definition of a normal $\kappa$-independent family,
    %$$S:=\Delta_{\alpha<\kappa}A_{i_\alpha}\setminus A_{j_\alpha}$$ is stationary in $\kappa$. Let us take any $\kappa>\rho\geq\lambda,\lambda'$. It is not hard to check that $$S\setminus \rho\subseteq (\bigcap_{i\in I}A_i)\cap(\bigcap_{j\in J}A_j^c)$$ and therefore this intersection is stationary.
%\end{proof}
\begin{corollary}\label{SupercomClub}
 If $\kappa$ is $2^\kappa$-supercompact then there is a $\kappa$-complete ultrafilter which extends the club filter and $\neg Gal(U,\kappa,\kappa^+)$.
\end{corollary}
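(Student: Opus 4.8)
The plan is to re‑run the construction of Theorem~\ref{SupercompactGalvin} verbatim, but feeding it the \emph{normal} $\kappa$-independent family of Proposition~\ref{normalfamily} and choosing the target ordinal $\delta$ of the derived ultrafilter so that, in addition, it lies above every club pushed up by $j$. First I would note that a $2^\kappa$-supercompact cardinal is in particular measurable, so $\diamond(\kappa)$ holds; by Proposition~\ref{normalfamily} fix a normal $\kappa$-independent family of length $2^\kappa$ and keep its first $\kappa^+$ members, obtaining a normal $\kappa$-independent family $\l A_i\mid i<\kappa^+\r$ (normality is tested only against $\kappa$-subsequences of the family, so it passes to subfamilies). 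Fix $j:V\to M$ witnessing $2^\kappa$-supercompactness, $i_U:V\to M_U$ the derived normal ultrapower, and $k:M_U\to M$ the factor map; as in the corollary following Theorem~\ref{SupercompactGalvin} the hypotheses of that theorem hold here, since $M$ is closed under $2^\kappa$-sequences and both $j[\kappa^+]$ and $k[i_U(\kappa)^+]$ have $V$-cardinality at most $2^\kappa$. Write $\l A'_\xi\mid\xi<j(\kappa)^+\r=j(\l A_i\mid i<\kappa^+\r)$.

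The one new ingredient is the choice of $\delta$. Let $D=\bigcap\{\,j(C)\mid C\subseteq\kappa\ \text{club}\,\}$. Since $|P(\kappa)|=2^\kappa$ and $M^{2^\kappa}\subseteq M$, the restriction $j\restriction P(\kappa)$, and hence the sequence $\l j(C)\mid C\ \text{club}\r$, belongs to $M$; there are at most $2^\kappa$ such sets, each a club of $j(\kappa)$ in $M$, and $j(\kappa)$ is inaccessible (in particular regular above $2^\kappa$) in $M$, so $D$ is a club of $j(\kappa)$ in $M$. On the other hand, by elementarity $\l A'_\xi\mid\xi<j(\kappa)^+\r$ is a normal $j(\kappa)$-independent family of $M$, and $j[\kappa^+]$ and $k[i_U(\kappa)^+]\setminus j[\kappa^+]$ are disjoint subsets of it of $M$-size $<j(\kappa)$, so by the corollary to Proposition~\ref{normalfamily} applied inside $M$ the set
$$S^*=\Big(\bigcap_{r\in j[\kappa^+]}A'_r\Big)\cap\Big(\bigcap_{s\in k[i_U(\kappa)^+]\setminus j[\kappa^+]}(A'_s)^c\Big)$$
is \emph{stationary} in $j(\kappa)$ in $M$. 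Hence I may pick $\delta\in S^*\cap D$ with $\delta\ge\kappa$.

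Now set $W=\{X\subseteq\kappa\mid\delta\in j(X)\}$. As $crit(j)=\kappa$ and $\kappa\le\delta<j(\kappa)$, $W$ is a $\kappa$-complete ultrafilter over $\kappa$; as $\delta\in D$, we have $\delta\in j(C)$ for every club $C\subseteq\kappa$, i.e. $Cub_\kappa\subseteq W$. Finally, $\l A_i\mid i<\kappa^+\r$ witnesses $\neg Gal(W,\kappa,\kappa^+)$ exactly as in Theorem~\ref{SupercompactGalvin}: each $A_i\in W$ since $j(i)\in j[\kappa^+]$ and $\delta\in S^*\subseteq A'_{j(i)}=j(A_i)$; and for $\mathcal I\in[\kappa^+]^\kappa$ one finds, as there, some $s\in j(\mathcal I)\cap\big(k[i_U(\kappa)^+]\setminus j[\kappa^+]\big)$ (e.g. the image under $k$ of the $\kappa$-th element of $i_U(\mathcal I)$), so $\delta\in S^*\subseteq(A'_s)^c$ and therefore $\delta\notin j\big(\bigcap_{i\in\mathcal I}A_i\big)=\bigcap_{t\in j(\mathcal I)}A'_t$, i.e. $\bigcap_{i\in\mathcal I}A_i\notin W$.

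I expect the only delicate point to be the claim that $D$ is a club of $M$: this is precisely where the full hypothesis $M^{2^\kappa}\subseteq M$ is used — to collect all $2^\kappa$ clubs of $\kappa$ together with their $j$-images inside $M$ — in conjunction with the regularity of $j(\kappa)$ above $2^\kappa$ in $M$. The rest is a routine adaptation of Theorem~\ref{SupercompactGalvin}, the essential change being to run it with the normal family of Proposition~\ref{normalfamily}, which promotes the set $S^*$ from merely nonempty to stationary, and thus guarantees $S^*\cap D\neq\emptyset$.
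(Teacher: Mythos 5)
Your proposal is correct and follows essentially the same route as the paper: apply Proposition~\ref{normalfamily} to get a normal $\kappa$-independent family, rerun the construction of Theorem~\ref{SupercompactGalvin}, and use $2^\kappa$-closure of $M$ to see that $\bigcap j[Cub_\kappa]$ is a club of $j(\kappa)$ in $M$ which must meet the stationary set $S^*$ supplied by normality. Your added care in checking that the stationarity of $S^*$ is computed inside $M$ via elementarity is a welcome explicit touch but not a departure from the paper's argument.
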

\begin{proof}
First note that since $\kappa$ is $2^\kappa$-supercompact, then $\diamondsuit(\kappa)$ holds. Apply Proposition \ref{normalfamily} to obtain a normal $\kappa$-independent family $\l A_i\mid i<2^\kappa\r$. Next, we use the same construction as in Theorem \ref{SupercompactGalvin} starting with the family $\l A_i\mid i<2^{\kappa}\r$. Note by $2^\kappa$-closure of $M$, $j[Cub_\kappa]$ is a set of less than $j(\kappa)$ many clubs at $j(\kappa)$ and hence $\cap j[Cub_\kappa]$ is a club at $j(\kappa)$. Applying the normality of the family we can find now $$\kappa\leq\delta\in (\bigcap j[Cub_\kappa])\cap (\bigcap_{\alpha<\kappa^+} j(A_{\alpha}))\cap(\bigcap_{\beta\in k[i(\kappa^+)]\setminus j[\kappa^+]}(A'_\beta)^c) $$ By the previous corollary, $(\bigcap_{\alpha<\kappa^+} j(A_{\alpha}))\cap(\bigcap_{\beta\in k[i(\kappa^+)]\setminus j[\kappa^+]}(A'_\beta)^c)$ is stationary in $j(\kappa)$ and therefore such $\delta$ exists. Deriving the same ultrafilter $W=\{X\subseteq \kappa\mid \delta\in j(X)\}$, it is clear that $W$ is a non-Galvin $\kappa$-complete ultrafilter over $\kappa$ which extends the club filter.
\end{proof}
This type of independent family can further be used to answer another question  \cite[Question 3.4]{Non-GalvinFil} about constructing a $\kappa^+$-complete filter $\mathcal{F}$ which extends the club filter on $\kappa^+$ and $\neg Gal(\mathcal{F},\kappa^+,\kappa^{++})$. Here we will prove that there is such a filter if we just assume $\diamondsuit(\kappa)$ and in particular, $L$ is a model where there is always such a filter. This demonstrates, without large cardinals, why Galvin's theorem (and in fact the generalization of it which was considered here) is sharp in the sense that we cannot drop the combinatorial assumption about the filter from Galvin's theorem.
\begin{theorem}\label{nongalfil}
 Suppose that $\kappa$ is a regular cardinal such that $\diamondsuit(\kappa)$ holds, then there is a $\kappa$-complete filter on $\kappa$, extending the club filter, which fails to satisfy the Galvin property.
\end{theorem}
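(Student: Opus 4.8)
The plan is to take $\mathcal{F}$ to be the filter generated by the club filter together with the first $\kappa^+$ members of a normal $\kappa$-independent family, and then to exploit the stationarity of Boolean combinations of such a family (the corollary following Proposition~\ref{normalfamily}, stating that in a normal $\kappa$-independent family the intersection of fewer than $\kappa$ of the sets with fewer than $\kappa$ of the complements is stationary) twice: once to see that $\mathcal{F}$ is a proper $\kappa$-complete filter, and once to see that the generating sequence itself witnesses the failure of the Galvin property. Concretely: since $\diamond(\kappa)$ holds, Proposition~\ref{normalfamily} yields a normal $\kappa$-independent family $\langle A_i\mid i<2^\kappa\rangle$, and since $2^\kappa\geq\kappa^+$ we keep only $\langle A_i\mid i<\kappa^+\rangle$. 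Define
$$\mathcal{F}=\Big\{X\subseteq\kappa\;\Big|\;\exists C\in Cub_\kappa\ \exists e\in[\kappa^+]^{<\kappa}\ \ C\cap\bigcap_{i\in e}A_i\subseteq X\Big\}.$$

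Next I would check that $\mathcal{F}$ is a proper $\kappa$-complete filter extending $Cub_\kappa$. It is clearly upward closed, contains $Cub_\kappa$ (take $e=\emptyset$) and contains each $A_i$ (take $C=\kappa$, $e=\{i\}$). For $\kappa$-completeness, given $\langle X_\xi\mid\xi<\theta\rangle$ in $\mathcal{F}$ with $\theta<\kappa$ and witnesses $C_\xi,e_\xi$, regularity of $\kappa$ makes $C:=\bigcap_{\xi<\theta}C_\xi$ a club and $e:=\bigcup_{\xi<\theta}e_\xi$ a set of size $<\kappa$, and $C\cap\bigcap_{i\in e}A_i\subseteq\bigcap_{\xi<\theta}X_\xi$, so the intersection lies in $\mathcal{F}$. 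Properness is where the special family is used: for any club $C$ and any $e\in[\kappa^+]^{<\kappa}$, the corollary following Proposition~\ref{normalfamily} gives that $\bigcap_{i\in e}A_i$ is stationary, hence $C\cap\bigcap_{i\in e}A_i$ is stationary, in particular nonempty, so $\emptyset\notin\mathcal{F}$.

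The last step is the failure of the Galvin property, witnessed by $\langle A_i\mid i<\kappa^+\rangle$ (these are pairwise distinct by $\kappa$-independence, and all lie in $\mathcal{F}$). Let $I\in[\kappa^+]^\kappa$ and suppose for contradiction that $\bigcap_{i\in I}A_i\in\mathcal{F}$, with witnesses $C\in Cub_\kappa$ and $e\in[\kappa^+]^{<\kappa}$ such that $C\cap\bigcap_{j\in e}A_j\subseteq\bigcap_{i\in I}A_i$. Since $|I|=\kappa>|e|$, pick $i^*\in I\setminus e$; then $C\cap\bigcap_{j\in e}A_j\subseteq A_{i^*}$, i.e.\ $C\cap\bigcap_{j\in e}A_j\cap A_{i^*}^c=\emptyset$. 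But $e$ and $\{i^*\}$ are disjoint subsets of $\kappa^+$ of size $<\kappa$, so by the corollary following Proposition~\ref{normalfamily} the set $\bigcap_{j\in e}A_j\cap A_{i^*}^c$ is stationary, and intersecting a stationary set with the club $C$ leaves it stationary, hence nonempty --- a contradiction. Thus $\bigcap_{i\in I}A_i\notin\mathcal{F}$ for every $I\in[\kappa^+]^\kappa$, so $\neg Gal(\mathcal{F},\kappa,\kappa^+)$.

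I do not expect a genuine obstacle here: the hard combinatorics --- producing a $\kappa$-independent family whose every ``$<\kappa$-many sets minus $<\kappa$-many sets'' combination is stationary --- is already done in Proposition~\ref{normalfamily} and its corollary. The only points needing care are the routine bookkeeping for $\kappa$-completeness (using regularity of $\kappa$ to keep $<\kappa$-intersections of clubs clubs and $<\kappa$-unions of small sets small) and the elementary fact that intersecting with a club preserves stationarity, which is exactly what upgrades ``$A_{i^*}^c$ meets $\bigcap_{j\in e}A_j$'' to ``$A_{i^*}^c$ meets $C\cap\bigcap_{j\in e}A_j$'' in the final contradiction.
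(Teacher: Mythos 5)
Your proposal is correct and follows essentially the same route as the paper: the same filter $\mathcal{F}$ generated by $Cub_\kappa$ together with $\kappa^+$ members of a normal $\kappa$-independent family, with the same use of stationarity of the combinations $\bigcap_{j\in e}A_j\cap A_{i^*}^c$ to derive the contradiction. Your write-up is merely a bit more explicit about properness and the $\kappa$-completeness bookkeeping, which the paper leaves implicit.
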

\begin{proof}
    Suppose that $\l A_i\mid i<\kappa^+\r$ is a normal-independent family of $\kappa^+$-many subsets of $\kappa$. Let us define $\mathcal{F}$ to be the minimal $\kappa$-complete filter which extends the family $Cub_\kappa\cup\{A_i\mid i<\kappa^+\}$. Namely, we define $$\mathcal{F}=\{X\subseteq \kappa\mid \exists C\in Cub_\kappa \ \exists I\in[\kappa^+]^{<\kappa}, \ C\cap(\bigcap_{i\in I}A_i)\subseteq X\}$$ This is indeed a $\kappa$-complete filter since the intersection of fewer than $\kappa$ many of the sets $A_i$ is stationary; therefore, this family has the $<\kappa$-intersection property. This guarantees that it generates a $\kappa$-complete filter which we denote by $\mathcal{F}$. Clearly, $\mathcal{F}$ extends the club filter. Let us prove that the family $\l A_i\mid i<\kappa^+\r$ witnesses the failure of the Galvin property. Suppose otherwise, then there is $I\in [\kappa^+]^{\kappa}$ such that $\bigcap_{i\in I}A_i\in \mathcal{F}$. It follows by the definition of $\mathcal{F}$, that there is $J\in [\kappa^+]^{<\kappa}$ and a club $C$ such that $C\cap(\bigcap_{j\in J}A_j)\subseteq \bigcap_{i\in I}A_i$. Pick any $i^*\in I\setminus J$. Such an index exists since $|I|=\kappa$ and $|J|<\kappa$. By normal independence, $(\bigcap_{j\in J}A_j)\cap A^c_{i^*}$ is stationary, and therefore there is some $\nu\in C\cap (\bigcap_{j\in J}A_j)$ such that $\nu\notin A_{i^*}$. But this is absurd since this would mean that $\nu\notin \bigcap_{i\in I}A_i$.
\end{proof}
It follows now, unlike the situation with non-Galvin ultrafilters, that the existence of non-Galvin filters fits well in canonical inner models.
\begin{corollary}\label{Lsituation}
 Assume $V=L$, then every regular cardinal $\kappa$ admits a $\kappa$-complete filter extending the club filter and fails to satisfy the Galvin property.
\end{corollary}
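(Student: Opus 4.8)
The plan is to observe that Corollary~\ref{Lsituation} is an immediate consequence of Theorem~\ref{nongalfil} together with the classical fact, due to Jensen, that the combinatorial principle $\diamond$ holds at every regular cardinal in the constructible universe. Concretely, I would first recall that if $V=L$ then $\diamond_\kappa$ — indeed $\diamond_\kappa(S)$ for every stationary $S\subseteq\kappa$ — holds for every regular uncountable cardinal $\kappa$; this is part of Jensen's fine-structural analysis of $L$, and in particular it supplies a $\diamond(\kappa)$-sequence of the kind required by Proposition~\ref{normalfamily}. Since $V=L$ also entails $GCH$, one gets $2^\kappa=\kappa^+$ for free, so the normal $\kappa$-independent family produced has the expected length; but in fact the exact value of $2^\kappa$ is irrelevant to the argument, only $\diamond(\kappa)$ matters.

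With this in hand, fix an arbitrary regular cardinal $\kappa$. If $\kappa$ is uncountable, then $\diamond(\kappa)$ holds by the previous paragraph, so Theorem~\ref{nongalfil} applies verbatim and produces a $\kappa$-complete filter $\mathcal{F}$ with $Cub_\kappa\subseteq\mathcal{F}$ and $\neg Gal(\mathcal{F},\kappa,\kappa^+)$. (The case $\kappa=\omega$ is either read vacuously or disposed of directly, the club filter on $\omega$ being the Fr\'echet filter; all the interesting content of the corollary is at uncountable $\kappa$.) As $\kappa$ was an arbitrary regular cardinal, we conclude that every regular cardinal of $L$ carries a $\kappa$-complete filter extending the club filter and failing the Galvin property, which is exactly the assertion of the corollary.

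I expect no genuine obstacle here, since all the substantive work has already been carried out: Proposition~\ref{normalfamily} performs the $\diamond$-driven construction of a normal $\kappa$-independent family, and Theorem~\ref{nongalfil} converts such a family into the desired non-Galvin $\kappa$-complete filter over the club filter. The only point requiring mild care is invoking the correct strength of Jensen's theorem, namely that $\diamond$ holds at \emph{every} regular uncountable cardinal in $L$ and not merely at $\omega_1$; with that citation in place the corollary is a one-line deduction, and it is worth remarking that, in contrast with the non-Galvin \emph{ultrafilters} of the previous sections, this shows the sharpness of Galvin's theorem requires no large-cardinal hypotheses at all.
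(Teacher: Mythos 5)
Your proposal is correct and is exactly the deduction the paper intends: $V=L$ gives $\diamond(\kappa)$ at every regular uncountable $\kappa$ by Jensen's theorem, and Theorem~\ref{nongalfil} then yields the non-Galvin $\kappa$-complete filter extending $Cub_\kappa$. The paper leaves this one-line argument implicit, and your added remarks (handling $\kappa=\omega$, noting that the value of $2^\kappa$ is irrelevant) are harmless refinements of the same route.
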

\begin{remark}
     Note that the filter we constructed in Theorem \ref{nongalfil}, extends to a normal $\kappa$-complete filter $\mathcal{F}^*$, namely, 
$$\mathcal{F}^*:=\{X\subseteq \kappa\mid \exists \l\alpha_i\mid i<\kappa\r\in [\kappa^+]^{\kappa}, \ \Delta_{i<\kappa}A_{\alpha_i}\subseteq X\}.$$
As mentioned in the introduction, Galvin's theorem now applies (indeed $\diamondsuit(\kappa)$ implies $\kappa^{<\kappa}=\kappa$) to the normal filter $\mathcal{F}^*$ to conclude it is again Galvin. This is quite an interesting situation as the known methods which ``correct" a non-Galvin ultrafilter and makes it Galvin (see \cite{bgp}) are quite brutal in the sense that we either collapse the generator of an ultrafilter so it extends to an ultrafilter which is isomorphic to a normal one or we diagonalize the filter and make it eventually a $p$-point. 
\end{remark}
A closely related question to the existence of a $\kappa$-complete filter $\mathcal{F}$ extending the club filter such that $Gal(\mathcal{F},\kappa,\kappa^+)$ fails is the question about the $\kappa^+$-saturation of the club filter over $\kappa$ (or the dual non-stationary ideal) which is the assertion that that from every $\kappa^+$-many \textit{stationary} sets there are always two for which the intersection is again stationary. A classical result of Shelah establishes that for any $\kappa>\omega_1$, the non-stationary ideal on $\kappa^+$ is not saturated and this result was later generalized to include inaccessible cardinals by Gitik and Shelah. In light of these results, it would not be too bold to conjecture that it is ZFC-provable that every regular cardinal admits a non-Galvin, $\kappa$-complete filter which extends the club filter.

Let us conclude this paper with one more piece of evidence, which is given below in our last proposition. The proposition also demonstrates that the diamond principle is not necessary for the existence of such a filter and even GCH is not needed. 
\begin{proposition}
    Let $\l A_i\mid i<\lambda\r$ be a sequence of $\lambda$-many mutually generic Cohen subsets of $\kappa$ over $V$, then for each $I,J\in[\lambda]^{<\kappa}$, such that $I\cap J=\emptyset$, $(\bigcap_{i\in I}A_i)\cap(\bigcap_{j\in J}A^c_j)$ is stationary. In particular, if $\mathcal{F}$ is the $\kappa$-complete filter generated by $Cub_\kappa\cup\{A_i\mid i<\lambda\}$, then the sequence $\l A_i\mid i<\lambda\r$ witnesses that $\neg Gal(\mathcal{F},\kappa,\lambda)$. 
\end{proposition}
\begin{remark}
    The idea of using Cohen subsets to generate a non-Galvin filter was already noticed in \cite{Non-GalvinFil} but missed the part about extending the club filter.
\end{remark}
\begin{proof}
    We denote by $Add(\kappa,\lambda):=\{f:\lambda\times\kappa\rightarrow 2\mid |f|<\kappa\}$ ordered as usual by inclusion. Note that the value $f(\l i,\alpha\r)=0,1$ determined weather $\alpha\in A_i$ or $\alpha\notin A_i$ respectively. The proof that the Cohen sets form a $\kappa$-independent family is well known and that the $\kappa$-complete filter generated by the Cohens is non-Galvin can be found in \cite[Lemma 3.1]{Non-GalvinFil}. Now given $I,J$ as above, $(\bigcap_{i\in I}A_i)\cap(\bigcap_{j\in J}A^c_j)$ is stationary in $V[\l A_i\mid i<\lambda\r]$. Indeed, if $\lusim{C}$ is a $Add(\kappa,\lambda)$-name for a club at $\kappa$, and given any condition $p\in Add(\kappa,\lambda)$, construct an increasing sequence of conditions $p_n$ and an increasing sequence of ordinals $\beta_n$  such that $\dom(p_n)\subseteq \lambda\times \beta_n$ and $p_{n+1}\Vdash \beta_n\in \lusim{C}$. Then let $p_\omega=\bigcup_{n<\omega}p_n\in Add(\kappa,\lambda)$ and $\beta_\omega=\sup_{n<\omega}\beta_n$, then $p_\omega\Vdash \beta_\omega\in \lusim{C}$ and $\dom(p_\omega)\subseteq \lambda\times \beta_\omega$. Finally, extend $p_\omega$ to $p'$ by setting $p'(\l i,\beta_\omega\r)=1$ for each $i\in I$ and $p'(\l j,\beta_\omega\r)=0$ for each $j\in J$. By density we can pick such $p'\in G$ and this condition forces $\beta_\omega$ to be in $C\cap \bigcap_{i\in I}A_i\cap\bigcap A_j^c$, as wanted.
\end{proof}
\section{Open questions and acknowledgments}
In this section we collect the open problems which appeared in the previous chapters and add a few more:
\begin{question}
Does $\mathrm{UA}$ imply that every $\kappa$-complete ultrafilter over $\kappa$ satisfies that Galvin property?
\end{question}
\begin{question}
Is there a Galvin ultrafilter which is not Rudin-Keisler equivalent to an ultrafilter of the form appearing in corollary \ref{FormofUlt}?
\end{question}
The next question seeks smaller large cardinals than a supercompact for which we can prove the existence of a non-Galvin ultrafilter.
\begin{question}
    For which large cardinal notions ``$\phi(x)$" the following holds: $\phi(\kappa)$ implies the existence of a non-Galvin $\kappa$-complete ultrafilter over $\kappa$ such that $Cub_\kappa\subseteq U$?  
\end{question}
In this paper we proved that $\phi(x)=``x$ is supercompact" 
 satisfies the above and if $\phi(x)$ is below a superstrong then it is consistent that there is a model with no non-Galvin $\kappa$-complete ultrafilters over $\kappa$. We conjecture that strongly compact cardinals are sufficient and that the filter extension property is the crucial ingredient for the construction of such ultrafilters.
 \begin{question}
     Is it consistent to have a model where there is a regular cardinal $\kappa$ with no non-Galvin filters?
 \end{question}
 Of course, by our results, this should be a model where $\diamondsuit(\kappa)$ fails.
 \subsection*{Added in proof} Theorem \ref{Theorem: general n-fold of kappa p-points} was recently proven differently (and in fact generalized) by N. Dobrinen and the author using the notion of basically generated ultrafilters in \cite{TomNatasha}. Question $5.1$ was essentially answered negatively in \cite{TomGabe} by Goldberg and the author, more precisely, it is shown that under $\mathrm{UA}$ and ``every irreducible is Dodd-sound", an ultrafilter is Galvin if and only if it is an iterated sum of $p$-points. Question 5.2 was answered positively by Gitik \cite{GitikGal}. Question 5.3 was partially answered by Dobrinen and the author in \cite{TomNatasha} where it is shown that if $\kappa$ is a $\kappa$-compact cardinal (i.e. every $\kappa$-complete filter over $\kappa$ can be extended to a $\kappa$-complete ultrafilter over $\kappa$) there is a non-Galvin ultrafilter over $\kappa$ which extends the club filter, the proof there relies heavily on the techniques of this paper. This is further improved in \cite{TomGabe} to the new notion of \textit{non-Galvin cardinals}.
 % In particular, we would like to know if there is an analog to Galvin's theorem in this situation, namely, is there a combinatorial property which guarantees that the positive Galvin property holds? In which situations the Club filter satisfies this property?
 \subsection*{Acknowledgments} We would like to thank Moti Gitik for many helpful ideas. Also, we thank Gabriel Goldberg for pointing out the characterization of ultrafilters in the canonical inner models up to a superstrong cardinal and for countless insightful discussions. Finally, we would like to thank the referee of this paper who has contributed enormously to improve previous versions of this paper, and for their clever and to-the-point remarks.
 \bibliographystyle{amsplain}
\bibliography{ref}
\end{document}